\title{Topology and Diffeology via Metric-like Functions}
\author{Masaki Taho}                                                                               
\subjclass[2020]{Primary~54E35; Secondary~57P05}
\keywords{tiling space, embedding space, diffeology}
\address{GRADUATE SCHOOL OF MATHEMATICAL SCIENCES, THE
UNIVERSITY OF TOKYO, 3-8-1 KOMABA, MEGURO-KU, TOKYO, 153-8914,
JAPAN}
\email{taho@ms.u-tokyo.ac.jp}
\newtheorem{thm}{Theorem}[section]
\newtheorem{prop}[thm]{Proposition}
\newtheorem{lemma}[thm]{Lemma}
\newtheorem{cor}[thm]{Corollary}
\newtheorem*{mthm*}{Main Theorem}
\newtheorem*{thm*}{Theorem}
\newtheorem*{prop*}{Proposition}
\theoremstyle{definition}
\newtheorem{defi}[thm]{Definition}
\newtheorem{ex}[thm]{Example}
\newtheorem{remark}[thm]{Remark}
\newcommand{\topsp}{{\mathrm {Top}}}                 
\newcommand{\mfd}{{\mathrm {Mfd}}}                   
\newcommand{\dflg}{{\mathrm {Dflg}}}                 
\newcommand{\encl}{{\mathrm {Eucl}}}                 
\newcommand{\conti}{C}
\newcommand{\plotdom}[1]{U_{#1}}                      
\newcommand{\smoothmap}[2]{C^{\infty}(#1,#2)}
\newcommand{\embmfd}[2]{\Psi_{#1}(\mathbb{R}^{#2})}
\newcommand{\embgraph}[1]{\Phi{(#1)}}
\newcommand{\nhddr}[3]{\mathcal{U}_{#1,#2}(#3)}
\newcommand{\nhdgalatius}[3]{\mathscr{U}_{#1,#2}(#3)}
\newcommand{\mappingsp}[2]{C^0(#1,#2)}
\newcommand{\cptopen}[2]{W(#1,#2)}
\newcommand{\openball}[2]{B_{#1}{(#2)}}
\newcommand{\closedball}[1]{{#1}D}
\newcommand{\diffeomfdemb}[3]{\mathcal{P}_{#1}(#2,#3)}
\newcommand{\grassmannian}[2]{\mathop{\mathrm{Gr}}_{#1}(\mathbb{R}^{#2})}
\newcommand{\domain}[2]{s(#1, #2)}
\newcommand{\powerset}[1]{\mathcal{P}{(#1)}}
\newcommand{\prototile}{\mathscr{P}}
\newcommand{\Int}[1]{\operatorname{Int}{#1}}
\newcommand{\orbit}[1]{O{(#1)}}
\newcommand{\tilingsp}[1]{\Omega_{#1}}
\newcommand{\diam}[1]{\mathop{\mathrm{diam}}{#1}}
\newcommand{\Span}{\mathop{\mathrm{Span}}}
\newcommand{\entourage}[2]{U_{#1,#2}}
\newcommand{\IZ}{\cite{MR3025051}}
\newcommand{\SH}{\cite{MR3884529}}
\newcommand{\souriau}{\cite{MR0607688}}
\newcommand{\galatius}{\cite{MR2653727}}
\newcommand{\metricemb}{\cite{MR3243393}}
\newcommand{\graphemb}{\cite{MR2784914}}
\newcommand{\mfdembdflg}{\cite{MR4821355}}
\newcommand{\tilinginvertlimit}{\cite{MR1631708}}
\newcommand{\tilingtextbook}{\cite{MR2446623}}
\newcommand{\tilingdflg}{\cite{MR4882913}}
\newcommand{\broubakione}{\cite{MR1726779}}
\newcommand{\broubakitwo}{\cite{MR1726872}}
\newcommand{\quasiuniform}{\cite{MR660063}}
\newcommand{\gmtw}{\cite{MR2506750}}
\newcommand{\metriccantero}{\cite{Cantero}}
\newcommand{\thref}[1]{Theorem~\ref{#1}}
\newcommand{\proref}[1]{Proposition~\ref{#1}}
\newcommand{\defref}[1]{Definition~\ref{#1}}
\newcommand{\exref}[1]{Example~\ref{#1}}
\newcommand{\lemref}[1]{Lemma~\ref{#1}}
\newcommand{\remref}[1]{Remark~\ref{#1}}
\begin{document}

\begin{abstract}

    This paper investigates spaces equipped with a family of metric-like functions satisfying certain axioms. 
    These functions provide a unified framework for defining topology, uniformity, and diffeology.
    The framework is based on a family of metric-like functions originally introduced for spaces of submanifolds. 
    We show that the topologies, uniformities, and diffeologies of these spaces can be systematically derived from the proposed axioms. 
    Furthermore, the framework covers examples such as spaces with compact-open topologies, tiling spaces, and spaces of graphs, 
    which have appeared in different contexts. 
    These results support the study of spaces with metric-like structures from both topological and diffeological perspectives. 
\end{abstract}

\maketitle










\section{Introduction} \label{section intro}

The space $\embmfd{d}{N}$ of all closed $d$-dimensional submanifolds in $\mathbb{R}^N$ has been studied from various perspectives. 
Among these, Galatius--Randal-Williams introduced in \galatius~a specific topology on $\embmfd{d}{N}$, based on ideas that first appeared implicitly in the study of the cobordism category by Galatius-Madsen-Tillmann-Weiss \gmtw~and Galatius \graphemb.
Later, in Schommer-Pries \mfdembdflg, it was shown that this topology can be reinterpreted as the $D$-topology of a certain diffeology on $\embmfd{d}{N}$, 
providing a new perspective for studying this space. 

Diffeological spaces, introduced in \souriau~by Souriau in the 1980s, 
extend the notion of smooth manifolds to a broader class of spaces, including singular spaces and infinite-dimensional objects. 
Any diffeology naturally induces a topology, the $D$-topology, which is compatible with the smooth structure and provides a way to extend smooth structures while respecting the underlying topology. 
This makes them particularly useful for studying spaces like $\embmfd{d}{N}$, where standard manifold structures are not directly applicable. 


Fundamental properties of this topology, including its metrizability, were first established by B\"okstedt-Madsen in \metricemb. 
In \metricemb, a family of functions $\{d_r\colon \embmfd{d}{N} \times \embmfd{d}{N} \to \mathbb{R}_{\geq 0}\cup\{\infty\}\}_{r>0}$, indexed by $r>0$, 
was introduced to analyze the topology of $\embmfd{d}{N}$. These functions satisfy properties which resemble the axioms of a metric:

\begin{prop}[\metricemb, Lemma 4.2] \label{proposition reference embmfd axioms}
    Let $W_i\in \embmfd{d}{N}$ be $d$-dimensional closed submanifolds of $\mathbb{R}^N$. Then the following properties hold:
    \begin{description}
        \item[Symmetry]\leavevmode\\
         For all $r>0$, $d_r(W_1,W_2)=d_r(W_2,W_1)$. 
        \item[Upper semi-continuity]\leavevmode
         \begin{itemize}
            \item If $r_1\leq r_2$, then $d_{r_1}(W_1,W_2)\leq d_{r_2}(W_1,W_2)$. 
            \item If $d_r(W_1,W_2)<\varepsilon$, there exists $\delta > 0$ such that $d_{r+\delta}(W_1,W_2)<\varepsilon$.
        \end{itemize}
        \item[Weak triangle inequality]\leavevmode\\
        Let $r_1, r_2, r_3 > 0$. If $d_{r_1+r_2}(W_1,W_2)<r_3$ and $d_{r_1+r_2+r_3}(W_2,W_3)<r_2$, then 
        \[
            d_{r_1}(W_1,W_3)\leq d_{r_1+r_2}(W_1,W_2)+d_{r_1+r_2+r_3}(W_2,W_3).
        \]
    \end{description}
\end{prop}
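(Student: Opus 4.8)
The plan is to argue directly from the explicit description of the functions $d_r$ recorded in \metricemb. Up to the precise conventions there, $d_r(W_1,W_2)<\varepsilon$ means that there is a smooth embedding $\varphi$ of $W_1$, defined over a ball whose radius is governed by $r$, that is $\varepsilon$-close to the inclusion in the relevant sense and whose image coincides with $W_2$ throughout a ball again of radius governed by $r$; then $d_r(W_1,W_2)$ is the infimum of the admissible $\varepsilon$. Each of the stated properties is extracted by manipulating these witnessing embeddings, the technical inputs being: (i) restricting a smooth embedding to a sub-ball yields a smooth embedding that is no farther from the inclusion; (ii) a smooth embedding sufficiently close to the inclusion is a diffeomorphism onto its image, with inverse again close to the inclusion; and (iii) such an embedding can be prolonged past the ball on which it is defined by damping it to the inclusion across a thin outer annulus, still as a smooth embedding.

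\textbf{Symmetry} is immediate, the definition of $d_r$ in \metricemb being symmetric in its two arguments (alternatively, invert a witnessing embedding, using (ii)). For the first clause of \textbf{upper semi-continuity}, if $r_1\le r_2$ then restricting a witness for $d_{r_2}(W_1,W_2)<\varepsilon$ to the smaller ball produces, by (i), a witness for $d_{r_1}(W_1,W_2)<\varepsilon$, since coincidence with $W_2$ over the radius-$r_2$ ball a fortiori gives coincidence over the radius-$r_1$ ball; taking infima yields $d_{r_1}(W_1,W_2)\le d_{r_2}(W_1,W_2)$.

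The content lies in the other two statements. For the second clause of \textbf{upper semi-continuity}, start from a witness $\varphi$ for $d_r(W_1,W_2)<\varepsilon$. Since $\varphi(W_1)$ coincides with $W_2$ \emph{exactly} throughout the radius-$r$ ball, the two closed submanifolds have high-order contact along the bounding sphere, uniformly because that slice is compact; hence in a thin annulus just outside radius $r$ they differ, in the relevant sense, by only $O(\delta^2)$, so a correspondingly tiny ambient correction supported there --- together with the prolongation (iii) --- makes the image of $W_1$ coincide with $W_2$ out to radius $r+\delta$. As $\varepsilon-d_r(W_1,W_2)>0$, this extra $O(\delta^2)$ is absorbed once $\delta$ is small, giving $d_{r+\delta}(W_1,W_2)<\varepsilon$. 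For the \textbf{weak triangle inequality}, I would compose a witness $\varphi$ for $d_{r_1+r_2}(W_1,W_2)<r_3$ with a witness $\psi$ for $d_{r_1+r_2+r_3}(W_2,W_3)<r_2$, taking both displacement bounds arbitrarily close to the distances they bound: $\psi\circ\varphi$ is then displaced from the inclusion by less than the sum of those two bounds (the $C^0$ estimate being the pointwise triangle inequality in $\mathbb{R}^N$, the rest routine), and the extra margins $r_2$ and $r_3$ built into the two hypothesized radii are exactly what ensures that a point of the radius-$r_1$ ball --- carried out by $\varphi$ by less than $r_3$ and then by $\psi$ by less than $r_2$ --- stays, at each stage, inside the ball over which the relevant embedding is defined and matches. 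I expect this last step to be the main obstacle: the intermediate radii must be chosen so that $\psi\circ\varphi$ controls $d_{r_1}(W_1,W_3)$ on the nose rather than $d_{r'}(W_1,W_3)$ for some smaller $r'$, and making the margins line up with the $r_1+r_2$ and $r_1+r_2+r_3$ occurring in the statement is the one genuinely non-formal point.
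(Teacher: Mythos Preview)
The paper does not give its own proof of this proposition: it is quoted from \metricemb, and in \exref{example submanifold} the proof is explicitly omitted. However, the paper does prove the analogous statement for graph spaces (\exref{example graphemb}), remarking that the argument is ``largely similar to Lemma~4.2 in \metricemb'', so that proof is the right benchmark.

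Your treatment of symmetry, monotonicity, and the weak triangle inequality (compose witnesses, track radii) matches that benchmark. But your argument for the second clause of upper semi-continuity rests on a misreading of the definition. A witness $\phi\in\diffeomfdemb{r}{W_1}{W_2}$ is a diffeomorphism from an \emph{open} subset $U\subset W_1$ onto an \emph{open} subset $\phi(U)\subset W_2$, with $W_1\cap\closedball{r}\subset U$ and $W_2\cap\closedball{r}\subset\phi(U)$; the image already lies in $W_2$ everywhere it is defined, not just inside the ball. There is no ``contact along the bounding sphere'' to analyze, no $O(\delta^2)$ discrepancy in an annulus, and no damping or prolongation is needed. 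The actual argument is that the set $A=\{p\in U: |p-\phi(p)|+\mu(T_pW_1,T_{\phi(p)}W_2)<\varepsilon\}$ is open in $W_1$ and contains the compact set $\domain{\phi}{r}$; since $U$ and $\phi(U)$ are open and contain the closed intersections $W_i\cap\closedball{r}$, there is a $\delta>0$ with $W_1\cap\closedball{(r+\delta)}\subset A$ and $W_2\cap\closedball{(r+\delta)}\subset\phi(A)$. The \emph{same} $\phi$ then lies in $\diffeomfdemb{r+\delta}{W_1}{W_2}$ with $d'_{r+\delta}(\phi)<\varepsilon$. Your technical input (iii) is never used, and the $O(\delta^2)$ heuristic is not part of the argument.
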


Furthermore, in \metricemb, it was shown that these functions define a topology on $\embmfd{d}{N}$, which coincides with the topology defined in \galatius. 

\begin{prop}[\metricemb, Lemma 4.3, Theorem 2.1] \label{proposition reference embmfd topology}
    Given $M\in\embmfd{d}{N}$, $r>0$, and $\varepsilon>0$, define the neighborhoods
    \[
        \nhddr{r}{\varepsilon}{M}=\{N\in \embmfd{d}{N}\mid d_r(M, N)<\varepsilon\}.
    \]
    Then, the collection of sets $\nhddr{r}{\varepsilon}{M}$ forms a basis for a topology on $\embmfd{d}{N}$, 
    which coincides with the topology introduced in \galatius. 
\end{prop}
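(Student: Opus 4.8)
The plan is to prove the two assertions separately: that the sets $\nhddr{r}{\varepsilon}{M}$ form a basis for a topology --- a formal consequence of \proref{proposition reference embmfd axioms} together with the normalisation $d_r(M,M)=0$ --- and that the resulting topology agrees with the one of \galatius, which requires unwinding the definitions of both.

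\emph{The basis axioms.} Every $M$ lies in a basis set, since $d_r(M,M)=0$ (immediate from the definition of $d_r$ in \metricemb), so $M\in\nhddr{r}{\varepsilon}{M}$ for all $r,\varepsilon>0$. For the refinement property, suppose $N\in\nhddr{r_1}{\varepsilon_1}{M_1}\cap\nhddr{r_2}{\varepsilon_2}{M_2}$; I would treat the index $i\in\{1,2\}$ one at a time. Since $d_{r_i}(M_i,N)<\varepsilon_i$, pick $\eta_i>0$ with $d_{r_i}(M_i,N)<\varepsilon_i-\eta_i$, and then, by the upper semi-continuity clause, $\delta_i>0$ --- shrunk so that $\delta_i<\eta_i$ --- with $d_{r_i+\delta_i}(M_i,N)<\varepsilon_i-\eta_i$. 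Feeding $W_1=M_i$, $W_2=N$, an arbitrary $W_3=P$, and the parameters $(r_i,\delta_i,\varepsilon_i-\eta_i)$ into the weak triangle inequality shows that whenever $d_{r_i+\delta_i+\varepsilon_i-\eta_i}(N,P)<\delta_i$ one has
\[
    d_{r_i}(M_i,P)\le d_{r_i+\delta_i}(M_i,N)+d_{r_i+\delta_i+\varepsilon_i-\eta_i}(N,P)<(\varepsilon_i-\eta_i)+\delta_i<\varepsilon_i,
\]
i.e.\ $\nhddr{\rho_i}{\sigma_i}{N}\subseteq\nhddr{r_i}{\varepsilon_i}{M_i}$ with $\rho_i=r_i+\delta_i+\varepsilon_i-\eta_i$ and $\sigma_i=\delta_i$. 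Setting $r=\max\{\rho_1,\rho_2\}$ and $\varepsilon=\min\{\sigma_1,\sigma_2\}$ and invoking monotonicity in $r$ (the first upper semi-continuity clause) and the trivial monotonicity in $\varepsilon$ gives $N\in\nhddr{r}{\varepsilon}{N}\subseteq\nhddr{r_1}{\varepsilon_1}{M_1}\cap\nhddr{r_2}{\varepsilon_2}{M_2}$, as required. I do not anticipate any difficulty in this part.

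\emph{Identification with the topology of \galatius.} I would first recall that this topology likewise admits, at each $M\in\embmfd{d}{N}$, a neighbourhood basis of graph-type sets $\nhdgalatius{K}{\epsilon}{M}$ indexed by compact $K\subseteq\mathbb{R}^N$ and $\epsilon>0$ --- namely those $N$ that, over some open neighbourhood of $K$, coincide with the image of $M$ under a section of its normal bundle of ($C^\infty$-type, weighted-over-all-derivatives) size $<\epsilon$ --- this fact itself being part of what \metricemb~establishes. I would then recall from \metricemb that $d_r(M,N)$ is, up to a routine matching of descriptions, exactly the size of such a normal section over the ball $B_r$ of radius $r$ about the origin (and is $\infty$ when no such section exists over $B_r$). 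The proof is then the mutual refinement of the two neighbourhood systems at each $M$. For one inclusion, taking $K$ to be a closed ball $B_r$ and $\epsilon\le\varepsilon$: any $N\in\nhdgalatius{B_r}{\epsilon}{M}$ is a normal graph over $M$ of size $<\epsilon$ over a neighbourhood of $B_r$, hence of size $<\epsilon\le\varepsilon$ over $B_r$, so $d_r(M,N)<\varepsilon$ and $\nhdgalatius{B_r}{\epsilon}{M}\subseteq\nhddr{r}{\varepsilon}{M}$. For the other, given compact $K$ choose $r$ with $K\subseteq B_r$ and $\varepsilon'\le\epsilon$: if $d_r(M,N)<\varepsilon'$ then $N$ is a normal graph over $M$ of size $<\varepsilon'\le\epsilon$ over $B_r$, hence over an open neighbourhood of $K$ contained in $B_r$ (such a neighbourhood exists since $K$ is compact and $B_r$ open), so $N\in\nhdgalatius{K}{\epsilon}{M}$ and $\nhddr{r}{\varepsilon'}{M}\subseteq\nhdgalatius{K}{\epsilon}{M}$; here one uses that the closed balls are cofinal among the compact subsets of $\mathbb{R}^N$.

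The step I expect to be the main obstacle is precisely the "routine matching of descriptions" invoked above: faithfully translating B\"okstedt--Madsen's single real parameter $d_r$ --- which packages control of all derivatives of the normal section into one number --- against the all-derivatives $C^\infty$ condition used to cut out $\nhdgalatius{K}{\epsilon}{M}$, and handling the standard fudge between "graph over $K$" and "graph over a neighbourhood of $K$" so that one really obtains nested neighbourhoods along a compact exhaustion. Both the precise definition of $d_r$ from \metricemb and the precise form of the graph-neighbourhoods from \galatius~enter here; once they are made explicit, each of the two inclusions above is a matter of choosing the auxiliary radius and the auxiliary $\epsilon$ (or $\varepsilon'$) correctly.
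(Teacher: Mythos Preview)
Your basis argument is correct and is essentially the paper's own approach: the paper reproduces (as \proref{proposition basis of topology}, attributed to \metricemb) exactly the refinement step you carry out --- use upper semi-continuity to obtain $\delta$ with $d_{r+\delta}(M,N)<\varepsilon-\eta$, then feed the weak triangle inequality with those parameters. Your version wraps this into the two-set intersection form, but the core inequality is the same.

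For the identification with the \galatius\ topology, the present paper gives no proof at all: it simply defers to \metricemb\ (see the proof of \exref{example submanifold}). So there is nothing in the paper to compare your sketch against. That said, your sketch contains a substantive misconception worth flagging. You describe $d_r$ as ``packag[ing] control of all derivatives of the normal section into one number'', and frame the comparison as matching a $C^\infty$-type neighbourhood against a $C^\infty$-type $d_r$. But per \exref{example submanifold} (and \metricemb), $d_r$ is built from a diffeomorphism $\phi$ between open pieces of the two submanifolds and measures only
\[
    \sup_{p}\bigl(|p-\phi(p)|+\mu(T_pV,T_{\phi(p)}W)\bigr),
\]
i.e.\ zeroth-order displacement plus a Grassmannian (first-order) term --- no higher derivatives appear. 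The \galatius\ topology, on the other hand, is genuinely of $C^\infty$ type. Thus the equivalence is not a ``routine matching of descriptions'' at all: one direction requires the non-trivial fact that $C^1$-type closeness of closed submanifolds on a ball forces one to be a normal graph over the other, after which elliptic/compactness arguments (or the specific machinery in \metricemb, Theorem~2.1) upgrade to $C^\infty$ control on slightly smaller compacta. Your two-inclusion outline is the right architecture, but the content of the hard inclusion is precisely this $C^1$-to-$C^\infty$ upgrade, not a parameter chase. (A minor point: in your second inclusion you treat $B_r$ as open, but the $rD$ used in $d_r$ is the closed ball; this is harmless once you use upper semi-continuity to pass to $r+\delta$.)
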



Motivated by these developments, this paper introduces an abstract framework for spaces equipped with families of metric-like functions $\{d_r\}_{r>0}$, 
satisfying axioms similar to those in \metricemb. We show that for such spaces, topology, uniformity, and diffeology can be defined in a unified manner. 
This approach not only generalizes the structures found on $\embmfd{d}{N}$ but also provides new insights into other spaces.

One example we focus on in this paper is the tiling space, which is well studied and described in \tilingtextbook~by Sadun as a metric space representing a space of tilings of $\mathbb{R}^d$.
In \tilingdflg~by Alatorre--Rodr{\'\i}guez-Guzm{\'a}n, a diffeology was defined on tiling spaces, and its basic properties were studied. 
We show that the topology on tiling spaces, as defined in \tilingtextbook, can naturally be reinterpreted as arising from a family of $\{d_r\}_{r>0}$. 
Moreover, we show that the diffeology defined using this family $\{d_r\}_{r>0}$ coincides with the one introduced in \tilingdflg. These results form the main theorem of this paper.

Additionally, we show that the axioms for $\{d_r\}_{r>0}$ introduced in this paper appear in various other contexts, 
including the compact-open topology and the space of all graphs in $\mathbb{R}^N$. 
In the case of the space of graphs, as studied by Galatius in \graphemb, a slight relaxation of the axioms—specifically, a further weakening of the triangle inequality—allows us to incorporate these examples naturally. 
Thus, this paper provides a unified framework for studying spaces defined by families of metric-like functions, offering a new perspective on their topology, uniformity, and diffeology. 
This work represents a first step toward a broader understanding of such spaces, with potential applications beyond the specific cases considered here. 

We also show that a family $\{d_r\}_{r>0}$ gives rise to a natural pseudo-metric that is not necessarily symmetric on the space. 
In particular, in the case of submanifolds, this provides a simpler way to recover the result of \metricemb, Theorem 4.4. 
Cantero \metriccantero~also provided an explicit metric that induces the same topology on submanifold spaces.
Our approach takes a different angle by using an axiomatic formulation, which can be naturally applied to other types of spaces as well.

\section{Topology defined by \texorpdfstring{$d_r$}{dr}}

We first introduce the following axioms, inspired by the properties stated in \proref{proposition reference embmfd axioms}, 
which were originally established in \metricemb. 

\begin{defi} \label{definition axioms of dr}
    Let $X$ be a set, and $\{d_r \colon X \times X \to \mathbb{R}_{\geq 0}\cup\{\infty\} \}_{r>0}$ be a family of functions indexed by $r>0$. 
    We assume that the following axioms hold for any three elements $x$, $y$, $z\in X$: 
    \begin{description}
        \item[Self-distance Axiom]\leavevmode\\
        For all $r>0$, $d_r(x, x)=0$. 
        \item[Upper semi-continuity]\leavevmode
            \begin{itemize}
                  \item If $r_1\leq r_2$, then $d_{r_1}(x, y)\leq d_{r_2}(x, y)$. 
                  \item If $d_r(x, y)<\varepsilon$, there exists $\delta > 0$ such that $d_{r+\delta}(x, y)<\varepsilon$.
            \end{itemize}
        \item[Weak triangle inequality]\leavevmode\\
        Let $r_1, r_2, r_3 > 0$. If $d_{r_1+r_2}(x,y)<r_3$ and $d_{r_1+r_2+r_3}(y, z)<r_2$, then 
        \[
            d_{r_1}(x, z)\leq d_{r_1+r_2}(x, y)+d_{r_1+r_2+r_3}(y, z).
        \]
    \end{description}
\end{defi}

\begin{remark}{}{}
  For simplicity, we use the notation $r_{i_1\dots i_k}$ to denote $r_{i_1}+\dots+r_{i_k}$. 
  Under this notation, the weak triangle inequality can be rewritten as follows:
  Let $r_1>0, r_2>0$, $r_3>0$. 
  If $d_{r_{12}}(x, y)<r_3$ and $d_{r_{123}}(y, z)<r_2$, then 
  $d_{r_1}(x, z)\leq d_{r_{12}}(x, y)+d_{r_{123}}(y, z)$.
\end{remark}

\begin{defi} \label{definition neighborhoods of topology}
  Let $(X, \{d_r\}_{r>0})$ satisfy the axioms in \defref{definition axioms of dr}.
  For a given $x\in X$, $r>0$, $\varepsilon>0$, we define the neighborhood
  \[
      \nhddr{r}{\varepsilon}{x}=\{y\in X\mid d_r(x, y)<\varepsilon\}.
  \]
\end{defi}

It is important to note that $d_r$ is not necessarily symmetric. Consequently, the order of the arguments in this definition must be carefully considered.
Moreover, observe that $x\in\nhddr{r}{\varepsilon}{x}$ always holds. 

The following proposition and its proof are identical to those given in \metricemb.

\begin{prop}[\metricemb, Lemma 4.3] \label{proposition basis of topology}
    Let $(X, \{d_r\}_{r>0})$ satisfy the axioms in \defref{definition axioms of dr}. Suppose $x\in X$, and let $r>0$ and $\varepsilon>0$. 
    If $y\in \nhddr{r}{\varepsilon}{x}$, then there exist $r'>0$ and $\varepsilon'>0$ such that 
    \[
        \nhddr{r'}{\varepsilon'}{y}\subset \nhddr{r}{\varepsilon}{x}. 
    \]
    In particular, the collection of sets $\nhddr{r}{\varepsilon}{x}$ forms a basis for a topology on X. 
\end{prop}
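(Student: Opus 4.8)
The plan is to take $y \in \nhddr{r}{\varepsilon}{x}$, so $d_r(x,y) < \varepsilon$, and produce the radius $r'$ and threshold $\varepsilon'$ by working backwards from the weak triangle inequality. Writing $\eta = \varepsilon - d_r(x,y) > 0$, the idea is that any $z$ with $d_{r'}(y,z)$ small should satisfy $d_r(x,z) \leq d_{r'}(x,y) + d_{r'}(y,z)$ via the weak triangle inequality, and this right-hand side should be less than $\varepsilon$. The subtlety is that the weak triangle inequality in \defref{definition axioms of dr} relates $d_{r_1}(x,z)$ to $d_{r_{12}}(x,y)$ and $d_{r_{123}}(y,z)$ at \emph{shifted} radii, so I must choose the decomposition $r = r_1$, pick auxiliary $r_2, r_3$, and set $r' = r_1 + r_2 + r_3$; then $d_{r'}(y,z)$ controls $d_{r_1}(x,z) = d_r(x,z)$.

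First I would invoke upper semi-continuity (the second bullet): since $d_r(x,y) < \varepsilon$, there is $\delta > 0$ with $d_{r+\delta}(x,y) < \varepsilon$; in fact I would choose $\delta$ small enough that $d_{r+\delta}(x,y) < d_r(x,y) + \eta/2 < \varepsilon$, shrinking $\eta/2$ if needed so that $d_{r+\delta}(x,y) + (\text{a small positive quantity}) < \varepsilon$ still holds. Next I would split $\delta$ as $\delta = r_2 + r_3$ with $r_2, r_3 > 0$ chosen so that $r_3 > d_{r+r_2}(x,y)$ (possible because $d_{r+r_2}(x,y) \leq d_{r+\delta}(x,y) < \varepsilon$, so I can take $r_2$ tiny and $r_3$ close to $\delta$, ensuring $r_3$ exceeds the value $d_{r+r_2}(x,y)$; one must check monotonicity gives $d_{r+r_2}(x,y) \leq d_{r+\delta}(x,y)$, so a fixed choice works). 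Then set $r_1 = r$, $r' = r + \delta = r_{123}$, and $\varepsilon' = \min\{r_2,\ \varepsilon - d_{r+\delta}(x,y)\}$, both positive.

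Now for any $z \in \nhddr{r'}{\varepsilon'}{y}$, i.e. $d_{r'}(y,z) = d_{r_{123}}(y,z) < \varepsilon' \leq r_2$, I check the two hypotheses of the weak triangle inequality: $d_{r_{12}}(x,y) = d_{r+r_2}(x,y) < r_3$ by the choice of $r_2, r_3$, and $d_{r_{123}}(y,z) < r_2$ by the choice of $\varepsilon'$. The conclusion gives
\[
    d_r(x,z) = d_{r_1}(x,z) \leq d_{r_{12}}(x,y) + d_{r_{123}}(y,z) < d_{r+\delta}(x,y) + \varepsilon' \leq d_{r+\delta}(x,y) + (\varepsilon - d_{r+\delta}(x,y)) = \varepsilon,
\]
where I also used $d_{r+r_2}(x,y) \leq d_{r+\delta}(x,y)$ from monotonicity. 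Hence $z \in \nhddr{r}{\varepsilon}{x}$, proving the inclusion.

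For the final assertion that these sets form a basis, I would verify the two standard axioms for a basis: every point lies in some basic set (immediate since $x \in \nhddr{r}{\varepsilon}{x}$ for any $r, \varepsilon$), and given $x \in \nhddr{r_1}{\varepsilon_1}{x_1} \cap \nhddr{r_2}{\varepsilon_2}{x_2}$, applying the inclusion result just proved to each of the two sets yields $\nhddr{r_1'}{\varepsilon_1'}{x} \subset \nhddr{r_1}{\varepsilon_1}{x_1}$ and $\nhddr{r_2'}{\varepsilon_2'}{x} \subset \nhddr{r_2}{\varepsilon_2}{x_2}$, and then taking $r'' = \min\{r_1', r_2'\}$ (using monotonicity to shrink the radius) and $\varepsilon'' = \min\{\varepsilon_1', \varepsilon_2'\}$ gives a basic set around $x$ inside the intersection. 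The main obstacle is the bookkeeping in choosing $r_2$ and $r_3$ so that both the strict inequality $d_{r+r_2}(x,y) < r_3$ and $r_2 + r_3 = \delta$ hold simultaneously while keeping everything positive; this is where the interplay between upper semi-continuity and the shifted-radius form of the triangle inequality must be handled with care, but it goes through because the relevant $d$-values are all strictly below $\varepsilon$ and depend monotonically on the radius.
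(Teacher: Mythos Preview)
Your approach has the right shape---invoke upper semi-continuity to push $d_{r+\delta}(x,y)$ strictly below $\varepsilon$, then feed this into the weak triangle inequality---but the bookkeeping breaks at the step where you split $\delta$. You force $r_2 + r_3 = \delta$ and then claim you can arrange $r_3 > d_{r+r_2}(x,y)$ by taking $r_2$ tiny. Your justification is that $d_{r+r_2}(x,y) \leq d_{r+\delta}(x,y) < \varepsilon$, but this only bounds $d_{r+r_2}(x,y)$ by $\varepsilon$, not by anything comparable to $\delta$. Concretely, if $d_r(x,y) = 5$, $\varepsilon = 6$, and upper semi-continuity hands you only $\delta = 0.01$, then $r_3 < \delta = 0.01$ while $d_{r+r_2}(x,y) \geq d_r(x,y) = 5$, so $r_3 > d_{r+r_2}(x,y)$ is impossible. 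The axiom promises \emph{some} $\delta$, not a large one.

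The fix, which is exactly what the paper does, is to drop the constraint $r' = r + \delta$. Choose $\delta > 0$ and $\varepsilon_1 > 0$ with $d_{r+\delta}(x,y) < \varepsilon - \varepsilon_1$, then set $r_1 = r$, $r_2 = \delta$, $r_3 = \varepsilon - \varepsilon_1$, so that $r' = r + \delta + (\varepsilon - \varepsilon_1)$ and $\varepsilon' = \min\{\delta, \varepsilon_1\}$. Now the hypothesis $d_{r_{12}}(x,y) = d_{r+\delta}(x,y) < \varepsilon - \varepsilon_1 = r_3$ is automatic, and your chain of inequalities goes through verbatim.

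A minor slip in the basis verification: you take $r'' = \min\{r_1', r_2'\}$, but monotonicity gives $d_{r''} \leq d_{r_i'}$ when $r'' \leq r_i'$, hence $\nhddr{r_i'}{\varepsilon}{x} \subset \nhddr{r''}{\varepsilon}{x}$---the wrong containment. You need $r'' = \max\{r_1', r_2'\}$ (together with $\varepsilon'' = \min\{\varepsilon_1', \varepsilon_2'\}$) to get $\nhddr{r''}{\varepsilon''}{x} \subset \nhddr{r_i'}{\varepsilon_i'}{x}$ for both $i$.
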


\begin{proof}
By the axioms of $\{d_r\}_{r>0}$, there exist $\delta>0$ and $\varepsilon_1>0$ such that $d_{r+\delta}(x, y)<\varepsilon-\varepsilon_1$. 
With these values of $\delta$ and $\varepsilon_1$, if we set $\varepsilon'=\min\{\delta, \varepsilon_1\}$ and $r'=r+\delta+\varepsilon-\varepsilon_1$, then the inclusion above holds. 
To prove the inclusion, let $z\in\nhddr{r'}{\varepsilon'}{y}$ be any element. Since $d_{r+\delta}(x,y)<\varepsilon-\varepsilon_1$ and $d_{r'}(y, z)<\varepsilon'\leq \delta$, we have
\[
    d_r(x, z)\leq d_{r+\delta}(x,y)+d_{r'}(y, z)\leq \varepsilon-\varepsilon_1+\varepsilon'\leq \varepsilon
\]
by the weak triangle inequality. Thus, $z\in \nhddr{r}{\varepsilon}{x}$. 
\end{proof}

\begin{cor} \label{corollary upper semiconti}
Let $(X, \{d_r\}_{r>0})$ satisfy the axioms in \defref{definition axioms of dr}. 
For any $x\in X$ and $r>0$, the function
\[
    d_r(x, \cdot)\colon X\to \mathbb{R}_{\ge 0}\cup\{\infty\},\quad y\mapsto d_r(x, y)
\]
is upper semi-continuous.
\end{cor}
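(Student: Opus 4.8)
The plan is to unwind the definition of upper semi-continuity and observe that the statement is an essentially immediate consequence of \proref{proposition basis of topology}. Recall that a function $f\colon X\to\mathbb{R}_{\ge 0}\cup\{\infty\}$ is upper semi-continuous precisely when, for every real number $a$, the sublevel set $f^{-1}\bigl([0,a)\bigr)=\{y\in X\mid f(y)<a\}$ is open in $X$. I would therefore fix $x\in X$ and $r>0$ and examine the sets $\{y\in X\mid d_r(x,y)<a\}$ as $a$ ranges over $\mathbb{R}$.

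When $a\le 0$ this sublevel set is empty, hence open. When $a>0$ it is exactly the basic neighborhood $\nhddr{r}{a}{x}$ of \defref{definition neighborhoods of topology}, since by definition $\nhddr{r}{a}{x}=\{y\in X\mid d_r(x,y)<a\}$. By \proref{proposition basis of topology} the collection of all such sets is a basis for the topology on $X$, so in particular each $\nhddr{r}{a}{x}$ is open. Equivalently, one may argue pointwise: if $d_r(x,y_0)<a$ then $y_0\in\nhddr{r}{a}{x}$, and \proref{proposition basis of topology} provides $r'>0$ and $\varepsilon'>0$ with $\nhddr{r'}{\varepsilon'}{y_0}\subset\nhddr{r}{a}{x}$, exhibiting a neighborhood of $y_0$ on which $d_r(x,\cdot)$ stays below $a$. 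In either formulation, every sublevel set is open, which is exactly upper semi-continuity.

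I do not anticipate any genuine obstacle; the content of the corollary is carried entirely by \proref{proposition basis of topology}. The only points deserving a moment's attention are the convention for upper semi-continuity of an $[0,\infty]$-valued function — handled by letting the threshold $a$ range over all of $\mathbb{R}$, so that $a\le 0$ yields the empty set and no special treatment of the value $\infty$ is needed — and the elementary fact that a basic open set is open in the topology it generates.
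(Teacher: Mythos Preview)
Your proposal is correct and follows essentially the same approach as the paper: both arguments identify the sublevel set $\{y\mid d_r(x,y)<a\}$ with the basic neighborhood $\nhddr{r}{a}{x}$ and then invoke \proref{proposition basis of topology} to produce, for each point $y_0$ in it, a basic neighborhood $\nhddr{r'}{\varepsilon'}{y_0}$ contained in $\nhddr{r}{a}{x}$. Your version is slightly more careful in explicitly separating out the trivial case $a\le 0$.
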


\begin{proof}
Let $b\in \mathbb{R}$, and let $y\in d_r(x, \cdot)^{-1}((-\infty, b))=\nhddr{r}{b}{x}$ be any element. 
By \proref{proposition basis of topology}, there exist $r'>0$ and $\varepsilon>0$ such that 
\[
    \nhddr{r'}{\varepsilon}{y}\subset \nhddr{r}{b}{x}. 
\]
This implies that $d_r(x, \cdot)\colon X \to \mathbb{R}_{\geq 0}\cup \{\infty\}$ is upper semi-continuous.
\end{proof}

We now present some examples of spaces defined by $\{d_r\}_{r>0}$. 
In the first example, the family $\{d_r\}_{r>0}$ satisfies not only the weak triangle inequality but also the standard triangle inequality. Thus, this example may not be particularly surprising. 
However, it illustrates that spaces defined by $\{d_r\}_{r>0}$ can be viewed as generalizations of the compact-open topology.

\begin{ex} \label{example mapping space}
Let $(T, d_T)$ be a proper metric space, i.e., a metric space where every closed ball is compact. Let $(S, d_S)$ be another metric space, 
and consider the set $X=\mappingsp{T}{S}$ of all continuous maps from $T$ to $S$. 
Fix a point $t_0\in T$, and for each $r>0$, define 
\[
  \closedball{r}=\{t\in T\mid d_T(t, t_0)\leq r\}
\]
to be the closed ball centered at $t_0$ for each $r>0$. 
Then, for each $f, g\in X$, define 
\[
    d_r(f, g)=\sup_{x\in \closedball{r}} d_S(f(x),g(x)).
\] 

It is straightforward to verify that $(X, d_r)$ satisfies the axioms of \defref{definition axioms of dr}. 
Moreover, the topology induced by $\{d_r\}_{r>0}$ coincides with the compact-open topology on $\mappingsp{T}{S}$.
\end{ex}

\begin{proof}
    
Let $K\subset T$ be an arbitrary compact subset, and let $U\subset S$ be an arbitrary open subset. Consider an element $f\in \cptopen{K}{U}=\{g\colon T\to S\mid g(K)\subset U\}$. 
Since $K$ is compact, we can choose $r>0$ such that $K\subset \closedball{r}$. 
For each $x\in K$, there exists $\varepsilon_x>0$ such that $\openball{\varepsilon_x}{f(x)}\subset U$. 
By compactness of $K$, we can choose a finite set $\{x_1,\dots, x_n\}\subset K$ such that
\[
    K\subset \bigcup_{i=1}^n f^{-1}(\openball{\varepsilon_{x_i}/2}{f(x_i)}).
\]
Define $\varepsilon=\min\{\varepsilon_{x_i}/2\mid i=1,\dots, n\}$. Then, $\nhddr{r}{\varepsilon}{f}\subset \cptopen{K}{U}$. 

To prove the inclusion, let $g\in \nhddr{r}{\varepsilon}{f}$ and $x\in K$ be arbitrary. 
Since $x$ belongs to one of the preimages $f^{-1}(\openball{\varepsilon_{x_i}/2}{f(x_i)})$, we have
\[
  d_S(f(x_i),f(x))<\varepsilon_{x_i}/2. 
\]
Moreover, since $g\in \nhddr{r}{\varepsilon}{f}$, it follows that 
\[
    d_S(f(x), g(x))<\varepsilon\leq \varepsilon_{x_i}/2. 
\]
Thus, we conclude 
\[
    d_S(g(x), f(x_i))<\varepsilon_{x_i},
\]
which implies, $g(x)\in \openball{\varepsilon_{x_i}}{f(x_i)}\subset U$. 
Therefore, $\nhddr{r}{\varepsilon}{f}\subset \cptopen{K}{U}$. 

For the reverse inclusion, 
let $r>0$ and $\varepsilon>0$ be arbitrary, and consider $f\in \mappingsp{T}{S}$. 
For each $x\in \closedball{r}$, there exists $r_x>0$ such that $\openball{2r_x}{x}\subset f^{-1}(\openball{\varepsilon/2}{f(x)})$. 
Since $\closedball{r}$ is compact (by the assumption that $T$ is proper), there exists a finite set $\{x_1,\dots, x_n\}\subset \closedball{r}$ such that 
\[
    \closedball{r}\subset \bigcup_{i=1}^n \openball{r_{x_i}}{x_i}.
\]
For each $i=1,\dots, n$, define $W_i=\cptopen{\overline{\openball{r_{x_i}}{x_i}}}{\openball{\varepsilon/2}{f(x_i)}}$. Then, 
\[
    f\in \bigcap_{i=1}^n W_i\subset \nhddr{r}{\varepsilon}{f}.
\]
To prove the inclusion, let $g\in \bigcap_{i=1}^n W_i$ and $x\in \closedball{r}$. 
Since $x$ is contained in some $\openball{r_{x_i}}{x_i}$, we have $g(x)\in \openball{\varepsilon/2}{f(x_i)}$.
Moreover, since $\openball{r_{x_i}}{x_i}\subset f^{-1}(\openball{\varepsilon/2}{f(x_i)})$, we obtain $f(x)\in \openball{\varepsilon/2}{f(x_i)}$.
Thus, 
\[
    d_S(f(x), g(x))<\varepsilon
\]
for each $x\in \closedball{r}$. 
It follows that 
\[
    d_r(f,g)=\sup_{x\in \closedball{r}}d_S(f(x),g(x))=\max_{x\in \closedball{r}}d_S(f(x),g(x))<\varepsilon, 
\]
which shows that $g\in \nhddr{r}{\varepsilon}{f}$.
\end{proof}


The following example plays a fundamental role in the development of the concept of $\{d_r\}_{r>0}$.
\begin{ex}[\metricemb] \label{example submanifold}
  For $N\geq 2$ and $d<N$, we define 
  \[
    \embmfd{d}{N}=\{W^d\subset \mathbb{R}^N\mid \mbox{$W^d$ is a closed submanifold of $\mathbb{R}^N$ such that $\partial W=\emptyset$.}\}
  \]
  For each $r>0$, we define the closed ball of radius $r$ in $\mathbb{R}^N$ by $\closedball{r}=\{x\in \mathbb{R}^N\mid |x|\leq r\}$. For each $r>0$ and $V,W\in \embmfd{d}{N}$, we define 
  \[
    \diffeomfdemb{r}{V}{W}=\left\{\phi\colon U\to\phi(U)\;\middle|
    \begin{array}{l}
        \mbox{$U\subset V$ and $\phi(U)\subset W$ are both open,} \\
        \mbox{$\phi\colon U\to\phi(U)$ is a diffeomorphism,} \\
        \mbox{$V\cap \closedball{r}\subset U$,\; $W\cap \closedball{r}\subset \phi(U)$.}
    \end{array}
    \right\}.
  \]
  Moreover, for each $\phi\in \diffeomfdemb{r}{V}{W}$, 
  we define $\domain{\phi}{r}=(V\cap \closedball{r})\cup \phi^{-1}(W\cap \closedball{r})$, and 
  \[
    d_r'(\phi)=\sup_{p\in \domain{\phi}{r}}(|p-\phi(p)|+\mu(T_p V,T_{\phi(p)}W)),
  \]
  where $\mu$ is a metric on the Grassmannian $\grassmannian{d}{N}$. If $\domain{\phi}{r}=\emptyset$, 
  we define $d_r'(\phi)=0$.  
  Under this setting, we define
  \[
    d_r(V, W)=\inf_{\phi\in\diffeomfdemb{r}{V}{W}}d_r'(\phi)
  \]
  for each $r>0$. If $\diffeomfdemb{r}{V}{W}=\emptyset$, 
  we set $d_r(V, W)=\infty$. 
  This family $\{d_r\}_{r>0}$ satisfies the axioms in \defref{definition axioms of dr}. 
  The topology induced by this family $\{d_r\}_{r>0}$ coincides with the one defined in \galatius. 
\end{ex}


\begin{proof}
    This result was proved in \metricemb, so we omit the proof here. 
\end{proof}

\section{Tiling spaces and \texorpdfstring{$d_r$}{dr}}

In this section, we recall the definition of tiling spaces and discuss the relationship between the tiling spaces and the family $\{d_r\}_{r>0}$. 
For a more detailed treatment of tiling spaces, we refer the reader to the textbook \tilingtextbook~and the paper \tilinginvertlimit~by Anderson-Putnam. 

\begin{defi}[\tilingtextbook, \tilinginvertlimit, \tilingdflg] \label{definition tiling}
  Let $d\in \mathbb{Z}_{>0}$ be a positive integer. 
  A \textit{tiling} of $\mathbb{R}^d$ is a collection $T\subset \powerset{\mathbb{R}^d}$ that satisfies the following conditions:
  \begin{itemize}
    \item Each $t\in T$ is a translated copy of one of the \textit{prototiles}, where prototiles form a finite set $\prototile=\{\tau_1,\dots,\tau_n\}$ of subsets of $\mathbb{R}^d$. 
    \item Each $t\in T$ is homeomorphic to a closed $d$-disk.
    \item $\bigcup_{t\in T} t =\mathbb{R}^d$.
    \item For each $t_1, t_2\in T$ such that $t_1\neq t_2$, $\Int{t_1}\cap\Int{t_2}= \emptyset$. 
  \end{itemize}
  Let $T$ be a tiling of $\mathbb{R}^d$. For a subset $A\subset\mathbb{R}^d$, we define 
  \[
    T\cap A=\{t\in T\mid t\cap A\neq \emptyset\}.
  \]
\end{defi}

\begin{defi}[\tilingtextbook, \tilinginvertlimit, \tilingdflg] \label{definition tiling space}
  Let $T$ be a tiling of $\mathbb{R}^d$. For each $x\in \mathbb{R}^d$, we define the translated tiling as 
  \[
    T+x=\{t+x\mid t\in T\}. 
  \]
  We then define the orbit of $T$ under the translation action of $\mathbb{R}^d$ as
  \[
    \orbit{T}=\{T+x\mid x\in \mathbb{R}^d\}.
  \]
  For any two tilings $T, T'$, we define $d(T, T')$ as follows:
  \[
    d(T,T')=\inf\left(\left\{\varepsilon>0 \;\middle|
    \begin{array}{l}
        \mbox{There exist $u, v\in \mathbb{R}^d$ such that $|u|<\varepsilon$, $|v|<\varepsilon$,} \\
        \mbox{and $(T+u)\cap \openball{1/\varepsilon}{0}=(T'+v)\cap \openball{1/\varepsilon}{0}$}
    \end{array}
    \right\}\cup \{1/\sqrt{2}\}\right).
  \]
  This $d$ is easily verified to be a metric on $\orbit{T}$. 
  We define the \textit{tiling space} $\tilingsp{T}$ of $T$ as the metric completion of $\orbit{T}$ with respect to $d$.
\end{defi}

\begin{remark}{}{} \label{remark prototiles of tiling space}
   If $T$ is a tiling with $\prototile=\{\tau_1,\dots,\tau_n\}$ as its set of prototiles, then any $T' \in \tilingsp{T}$ can be regarded as a tiling whose set of prototiles is a subset of $\prototile$.
   This is because any limit of a Cauchy sequence in $\orbit{T}$ corresponds to a tiling in which every \textit{patch} appearing in $T'$ can also be found in $T$. 
   Here, a patch refers to a finite and connected union of tiles.
\end{remark}

\begin{prop} \label{proposition definition tiling dr}
    Let $T$ be a tiling of $\mathbb{R}^d$, and let $T_1$ and $T_2$ be two elements of $\tilingsp{T}$. 
    For each $r>0$, we define 
    \[
        d_r(T_1,T_2)=\inf\left\{\varepsilon>0 \;\middle|
        \begin{array}{l}
            \mbox{There exists $u\in \mathbb{R}^d$ such that $|u|<\varepsilon$,} \\
            \mbox{and $(T_1+u)\cap \closedball{r}=T_2\cap \closedball{r}$}
        \end{array}
        \right\}.
    \]
    If there is no $u\in\mathbb{R}^d$ such that $(T_1+u)\cap \closedball{r}=T_2\cap \closedball{r}$, this number is understood to be $\infty$. 
    Then, this family $\{d_r\}_{r>0}$ satisfies the axioms in \defref{definition axioms of dr}. 
\end{prop}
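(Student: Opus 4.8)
The plan is to verify the three axioms of \defref{definition axioms of dr} in turn. The Self-distance Axiom is immediate: taking $u = 0$ shows $(T_1 + 0)\cap \closedball{r} = T_1 \cap \closedball{r}$, so $d_r(T_1, T_1) = 0$ for every $r>0$. For the first part of Upper semi-continuity, suppose $r_1 \leq r_2$; if $u \in \mathbb{R}^d$ satisfies $(T_1+u)\cap \closedball{r_2} = T_2 \cap \closedball{r_2}$, then intersecting both sides with the smaller ball $\closedball{r_1}$ gives $(T_1+u)\cap\closedball{r_1} = T_2 \cap \closedball{r_1}$, so the infimum defining $d_{r_1}$ is taken over a superset of that for $d_{r_2}$, whence $d_{r_1}(T_1,T_2) \leq d_{r_2}(T_1, T_2)$.

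The second part of Upper semi-continuity is where the tiling structure matters, and I expect this to be the main obstacle. We are given $d_r(T_1, T_2) < \varepsilon$ and must produce $\delta > 0$ with $d_{r+\delta}(T_1,T_2) < \varepsilon$. Pick $\varepsilon' < \varepsilon$ and $u$ with $|u| < \varepsilon'$ and $(T_1+u)\cap\closedball{r} = T_2 \cap \closedball{r}$. The key geometric point is that, because tilings are built from finitely many prototiles each homeomorphic to a closed $d$-disk, every tile has diameter bounded by some constant $D$ depending only on $\prototile$ (see \remref{remark prototiles of tiling space}, so this bound applies uniformly on $\tilingsp{T}$). Hence a tile that meets $\closedball{r}$ is contained in $\closedball{r+D}$, and more importantly the agreement $(T_1+u)\cap\closedball{r} = T_2\cap\closedball{r}$ forces the two tilings to agree as \emph{sets of tiles} on a slightly larger ball: precisely, for any $r'$ with $r \le r' < r + (\text{gap})$ where the gap is controlled by how tiles abut $\closedball{r}$, one still has $(T_1+u)\cap\closedball{r'} = T_2\cap\closedball{r'}$ — indeed, since the tilings literally coincide (not just approximately) inside $\closedball{r}$, and each tile meeting $\closedball{r}$ lies in a bounded neighborhood, the set of tiles meeting $\closedball{r}$ determines the set of tiles meeting $\closedball{r+\delta}$ for small enough $\delta$, provided no new tile of $T_2$ suddenly touches the enlarged ball; choosing $\delta$ small relative to the distance from $\closedball r$ to the nearest tile not already counted makes this work. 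With such a $\delta$, the same $u$ witnesses $d_{r+\delta}(T_1, T_2) \le \varepsilon' < \varepsilon$. One must be slightly careful that $\delta$ can be chosen uniformly — but since only finitely many tiles are involved near $\closedball{r}$, the relevant minimum distance is positive.

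For the Weak triangle inequality, suppose $d_{r_{12}}(T_1, T_2) < r_3$ and $d_{r_{123}}(T_2, T_3) < r_2$. Choose $u$ with $|u| < d_{r_{12}}(T_1,T_2) + \eta$ (for small $\eta$) and $(T_1 + u)\cap \closedball{r_{12}} = T_2 \cap \closedball{r_{12}}$, and choose $v$ with $|v| < d_{r_{123}}(T_2,T_3) + \eta$ and $(T_2 + v)\cap\closedball{r_{123}} = T_3 \cap \closedball{r_{123}}$. I want to show $(T_1 + u + v)\cap\closedball{r_1} = T_3 \cap\closedball{r_1}$ with $|u+v| \le |u| + |v|$ not quite small enough directly, so the estimate must instead bound $d_{r_1}(T_1, T_3)$ by $|u| + |v|$, matching the claimed right-hand side $d_{r_{12}}(T_1,T_2) + d_{r_{123}}(T_2,T_3)$. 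The point is that translating $(T_1+u)\cap\closedball{r_{12}} = T_2\cap\closedball{r_{12}}$ by $v$ gives $(T_1 + u + v)\cap(\closedball{r_{12}} + v) = (T_2 + v)\cap(\closedball{r_{12}}+v)$; since $|v| < r_3$ and we are looking at the ball $\closedball{r_1}$, we have $\closedball{r_1} + v \subset \closedball{r_1 + |v|} \subset \closedball{r_{1} + r_3} \subset \closedball{r_{12} + r_3}$ hmm — the bookkeeping is exactly why the radii in the hypotheses are staggered as $r_{12}$ and $r_{123}$: the translate by $v$ of the region where $T_1+u$ and $T_2$ agree still covers $\closedball{r_1}$ because $|v| < r_3 < r_{123} - r_{12} + \dots$; and $T_2 + v$ agrees with $T_3$ on $\closedball{r_{123}} \supseteq \closedball{r_1}$ since $|v|<r_2$ keeps us inside. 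Carefully chasing which inclusion needs which of the three radius parameters, and then letting $\eta \to 0$, yields $d_{r_1}(T_1, T_3) \le |u| + |v| \to d_{r_{12}}(T_1,T_2) + d_{r_{123}}(T_2,T_3)$. This is routine once the inclusions are lined up, but it is the step most prone to off-by-a-radius errors, so I would write the chain of set inclusions explicitly.
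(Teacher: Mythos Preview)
Your proposal is correct and follows essentially the same route as the paper. Two minor cleanups: for the second half of upper semi-continuity the prototile diameter bound is not needed---the paper simply observes that $\bigcup\{t\in (T_1+u)\cup T_2 \mid t\cap\closedball{r}=\emptyset\}$ is closed and disjoint from the compact $\closedball{r}$, which immediately yields the required $\delta$; and in the triangle-inequality step the bound you want is $|v|<r_2$ (not $r_3$), which is exactly what gives $\closedball{r_1}\subset\closedball{r_{12}}+v$ and lets the tile-chase go through.
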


\begin{proof}
    The self-distance axiom is easy. 
    
    We prove upper semi-continuity. 
    Since $r_1\leq r_2$ implies $d_{r_1}(T_1, T_2)\leq d_{r_2}(T_1, T_2)$ straightforwardly, we proceed to prove the second condition. 
    
    Let $r > 0$, $T_1, T_2 \in \tilingsp{T}$, and $\varepsilon > 0$ be arbitrary satisfying $d_r(T_1, T_2) < \varepsilon$. 
    Then, there exists $u\in \mathbb{R}^d$ such that $|u|<\varepsilon$, and $(T_1+u)\cap \closedball{r}=T_2\cap \closedball{r}$. 
    Since the set 
    \[
        \bigcup\{t\in (T_1+u)\cup T_2\mid t\cap \closedball{r}= \emptyset\}
    \]
    is closed, we can separate it from $\closedball{r}$ using the normality of $\mathbb{R}^d$. 
    Therefore, there exists $\delta>0$ such that $(T_1+u)\cap \closedball{(r+\delta)}=T_2\cap \closedball{(r+\delta)}$. 
    This implies $d_{r+\delta}(T_1, T_2)<\varepsilon$. 

    Finally, we prove the weak triangle inequality. 
    Let $T_1, T_2, T_3\in\tilingsp{T}$, and $r_1, r_2, r_3 > 0$ be arbitrary satisfying  
    $d_{r_{12}}(T_1, T_2) < r_3$, $d_{r_{123}}(T_2, T_3) < r_2$. 
    Also, let $d_1, d_2 > 0$ be arbitrary satisfying 
    \[
        d_{r_{12}}(T_1, T_2)<d_1<r_3, \quad d_{r_{123}}(T_2, T_3)<d_2<r_2. 
    \] 
    It suffices to prove that $d_{r_1}(T_1, T_3)\leq d_1+d_2$. 
    First, there exist $u, v\in \mathbb{R}^d$ such that $|u|<d_1$, $|v|<d_2$, and 
    $(T_1+u)\cap \closedball{r_{12}}=T_2\cap \closedball{r_{12}}$, $(T_2+v)\cap \closedball{r_{123}}=T_3\cap \closedball{r_{123}}$. 
    Using these $u$ and $v$, we now verify the following identity to prove $d_{r_1}(T_1, T_3)\leq d_1+d_2$:
    \[
        (T_1+u+v)\cap \closedball{r_1}=T_3\cap \closedball{r_1}.
    \]
    Let $t_1+u+v\in (T_1+u+v)\cap \closedball{r_1}$ be a tile. Since $(t_1+u+v)\cap\closedball{r_1}\neq \emptyset$ and $|v|<d_2<r_2$, it follows that $(t_1+u)\cap \closedball{r_{12}}\neq \emptyset$. 
    Thus, there exists $t_2\in T_2\cap \closedball{r_{12}}$ such that $t_1+u=t_2$.
    Since $t_2+v=t_1+u+v\in (T_1+u+v)\cap \closedball{r_1}$, it follows that $(t_2+v)\cap \closedball{r_1}\neq \emptyset$, which in turn implies $(t_2+v)\cap \closedball{r_{123}}\neq \emptyset$. 
    Therefore, there exists $t_3\in T_3\cap \closedball{r_{123}}$ such that $t_2+v=t_3$. 
    Using this $t_3$, we obtain 
    \[
        t_1+u+v=t_3\in T_3\cap\closedball{r_1}, 
    \]
    which implies $(T_1+u+v)\cap \closedball{r_1}\subset T_3\cap \closedball{r_1}$. 

    Conversely, let $t_3\in T_3\cap \closedball{r_1}$ be a tile. 
    Since $(T_2+v)\cap\closedball{r_{123}}=T_3\cap \closedball{r_{123}}$, there exists $t_2\in T_2$ such that $t_2+v=t_3$. 
    From $t_2=t_3-v$, $|v|<d_2<r_2$, and $t_3\cap\closedball{r_1}\neq \emptyset$, it follows that $t_2\cap\closedball{r_{12}}\neq \emptyset$. 
    Therefore, there exists $t_1\in T_1$ such that $t_1+u=t_2$. 
    Using this $t_1$, we have 
    \[
        t_3=t_1+u+v\in (T_1+u+v)\cap\closedball{r_1}, 
    \]
    which implies $T_3\cap \closedball{r_1}\subset (T_1+u+v)\cap \closedball{r_1}$. 
    Thus, we conclude that $(T_1+u+v)\cap \closedball{r_1}=T_3\cap \closedball{r_1}$. 
    
    From this identity and the fact that $|u+v|<d_1+d_2$, it follows that $d_{r_1}(T_1, T_3)<d_1+d_2$. 
    Since $d_1$ and $d_2$ are arbitrary values satisfying $d_{r_{12}}(T_1, T_2)<d_1<r_3$ and $d_{r_{123}}(T_2, T_3)<d_2<r_2$, we conclude that $d_{r_1}(T_1, T_3)\leq d_{r_{12}}(T_1, T_2)+d_{r_{123}}(T_2, T_3)$.
\end{proof}

\begin{prop} \label{proposition tiling topology}
    For any tiling $T$, the tiling space $\tilingsp{T}$ equipped with the topology induced by the family $\{d_r\}_{r>0}$ (\proref{proposition definition tiling dr}) is homeomorphic to 
    $\tilingsp{T}$ endowed with the metric topology of $d$ (\defref{definition tiling space}).
\end{prop}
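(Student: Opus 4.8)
The plan is to prove the homeomorphism by showing that, at each point $T_1\in\tilingsp{T}$, the metric balls $B_d(T_1,\varepsilon)=\{T_2\mid d(T_1,T_2)<\varepsilon\}$ and the sets $\nhddr{r}{\varepsilon}{T_1}$ constitute equivalent neighbourhood bases. The family $\{\nhddr{r}{\varepsilon}{T_1}\}_{r,\varepsilon>0}$ is indeed a local base for the $\{d_r\}$-topology at $T_1$: each of these sets contains $T_1$, and by \proref{proposition basis of topology} every basic open set containing $T_1$ contains one of them. So it suffices to prove, for each $T_1$: \textbf{(a)} for all $r,\varepsilon>0$ there is $\varepsilon'>0$ with $B_d(T_1,\varepsilon')\subset\nhddr{r}{\varepsilon}{T_1}$, and \textbf{(b)} for all $\varepsilon>0$ there are $r,\delta>0$ with $\nhddr{r}{\delta}{T_1}\subset B_d(T_1,\varepsilon)$. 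Throughout I regard points of $\tilingsp{T}$ as tilings via \remref{remark prototiles of tiling space} and compute both $d$ and $d_r$ on $\tilingsp{T}$ from their defining formulas. The only genuine issue is translating between the two packagings of the condition ``the tilings agree on a large region after a small shift'': a single translation and closed balls $\closedball{r}$ for $d_r$ versus two translations and open balls $\openball{1/\varepsilon}{0}$ for $d$.

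For (b) the argument is short: given $\varepsilon\le 1/\sqrt2$, put $\varepsilon''=\varepsilon/2$, $r=1/\varepsilon''$, $\delta=\varepsilon''$; if $d_r(T_1,T_2)<\delta$, take $u$ with $|u|<\delta$ and $(T_1+u)\cap\closedball{r}=T_2\cap\closedball{r}$, and note that since $\openball{1/\varepsilon''}{0}\subset\closedball{r}$ this equality of sets of tiles restricts to the tiles meeting $\openball{1/\varepsilon''}{0}$; taking the second translation parameter to be $0$ then exhibits $\varepsilon''$ as an element of the set defining $d(T_1,T_2)$, so $d(T_1,T_2)\le\varepsilon''<\varepsilon$. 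For (a): given $r,\varepsilon$, choose $\varepsilon'\le\min\{\varepsilon/2,1/\sqrt2\}$ small enough that $r\varepsilon'+(\varepsilon')^2\le 1$, so that $r+\varepsilon'\le 1/\varepsilon'$. If $d(T_1,T_2)<\varepsilon'$, then, using that $1/\sqrt2$ already belongs to the set over which $d$ is an infimum and that $\varepsilon'\le 1/\sqrt2$, there exist $\varepsilon''<\varepsilon'$ and $u,v$ with $|u|,|v|<\varepsilon''$ and $(T_1+u)\cap\openball{1/\varepsilon''}{0}=(T_2+v)\cap\openball{1/\varepsilon''}{0}$. Set $w=u-v$, so $|w|<2\varepsilon''<\varepsilon$. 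Since $r+\varepsilon''<r+\varepsilon'\le 1/\varepsilon'<1/\varepsilon''$, we have $\closedball{r}+v\subset\openball{1/\varepsilon''}{0}$; then for a tile $t\in T_2$ meeting $\closedball{r}$, $t+v$ meets $\closedball{r}+v\subset\openball{1/\varepsilon''}{0}$, so $t+v\in(T_2+v)\cap\openball{1/\varepsilon''}{0}=(T_1+u)\cap\openball{1/\varepsilon''}{0}$, whence $t+v=s+u$ with $s\in T_1$ and $t=s+w\in T_1+w$, and symmetrically every tile of $T_1+w$ meeting $\closedball{r}$ lies in $T_2$. Thus $(T_1+w)\cap\closedball{r}=T_2\cap\closedball{r}$ with $|w|<\varepsilon$, so $d_r(T_1,T_2)<\varepsilon$.

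Putting (a) and (b) together, the metric balls and the sets $\nhddr{r}{\varepsilon}{T_1}$ are equivalent neighbourhood bases at every point, so the two topologies on $\tilingsp{T}$ coincide and the identity map is the required homeomorphism. I expect the main obstacle to be precisely the bookkeeping in (a): choosing $\varepsilon'$ with $r\varepsilon'+(\varepsilon')^2\le 1$, which is what guarantees that a closed ball of radius $r$, shifted by less than $\varepsilon''$, still lies inside $\openball{1/\varepsilon''}{0}$, and then carefully tracking which tiles of which translated tiling meet which ball; the remaining verifications are routine.
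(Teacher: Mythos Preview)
Your proof is correct and follows essentially the same approach as the paper: both establish the homeomorphism by showing that the systems $\{B_d(T_1,\varepsilon)\}$ and $\{\nhddr{r}{\varepsilon}{T_1}\}$ are equivalent neighbourhood bases at each point, via the same two inclusions with the same underlying translation bookkeeping. One minor difference: the paper's choice of $\varepsilon'$ in direction (a) involves the maximum prototile diameter $\max_{\tau\in\prototile}\diam\tau$, whereas your condition $r\varepsilon'+(\varepsilon')^2\le 1$ avoids this; your direct tile-by-tile argument is in that respect slightly cleaner, but the overall strategy is identical.
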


\begin{proof}
    Let $T_1$ be a tiling in $\tilingsp{T}$. For any $r>0$ and $\varepsilon>0$, we need to find a number $\varepsilon'>0$ such that $\openball{\varepsilon'}{T_1}\subset \nhddr{r}{\varepsilon}{T_1}$, 
    where $\openball{\varepsilon'}{T_1}=\{T'\in\tilingsp{T}\mid d(T_1, T')<\varepsilon'\}$.
    Let $\prototile$ be the set of all prototiles of $T$. By \remref{remark prototiles of tiling space}, any tiling in $\tilingsp{T}$ consists of tiles that are translated copies of the prototiles in $\prototile$. 
    We define $\varepsilon'>0$ to be a positive number satisfying 
    \[
        \varepsilon'<\min\left\{\dfrac{1}{r+\varepsilon/2+\max_{\tau\in \prototile}\diam{\tau}}, \varepsilon/2\right\}. 
    \]
    Then, the inclusion $\openball{\varepsilon'}{T_1}\subset \nhddr{r}{\varepsilon}{T_1}$ holds. 
    Indeed, let $T'\in \openball{\varepsilon'}{T_1}$ be an element. 
    Then, there exist $u, v\in\mathbb{R}^d$ such that $|u|<\varepsilon'$, $|v|<\varepsilon'$, and 
    \[
        (T_1+u)\cap \openball{1/\varepsilon'}{0}=(T'+v)\cap \openball{1/\varepsilon'}{0}.
    \] 
    Since $|u-v|<2\varepsilon'<\varepsilon$, there exists $w\in \mathbb{R}^d$ such that $|w|<\varepsilon$ and 
    \[
        (T_1+w)\cap \openball{1/\varepsilon'- \varepsilon/2}{0} = T'\cap \openball{1/\varepsilon'-\varepsilon/2}{0}.
    \]
    From $r+\varepsilon/2+\max_{\tau\in \prototile}\diam{\tau}<1/\varepsilon'$, it follows that 
    \[
        (T_1+w)\cap \closedball{r} = T'\cap \closedball{r}, 
    \]
    which implies $d_r(T_1, T')<\varepsilon$. Thus, we conclude that $\openball{\varepsilon'}{T_1}\subset \nhddr{r}{\varepsilon}{T_1}$. 
    
    Next, we prove the reverse inclusion. 
    Let $T_1$ be a tiling in $\tilingsp{T}$. For any $\varepsilon>0$ such that $\varepsilon<1/\sqrt{2}$, we need to find two numbers $r'>0$ and $\varepsilon'>0$ such that $\nhddr{r'}{\varepsilon'}{T_1}\subset \openball{\varepsilon}{T_1}$. 
    Define $r'>0$ and $\varepsilon'>0$ to satisfy $\varepsilon'<\varepsilon$ and $r'>1/\varepsilon$. Then, $\nhddr{r'}{\varepsilon'}{T_1}\subset \openball{\varepsilon}{T_1}$. 
    Indeed, let $T'\in \nhddr{r'}{\varepsilon'}{T_1}$ be an element. 
    Then, there exists $w\in \mathbb{R}^d$ such that $|w|<\varepsilon'<\varepsilon$ and 
    \[
        (T_1+w)\cap \closedball{r'}=T'\cap \closedball{r'}. 
    \]
    Since $\openball{1/\varepsilon}{0}\subset \closedball{r'}$, we obtain 
    \[
        (T_1+w)\cap \openball{1/\varepsilon}{0}=T'\cap \openball{1/\varepsilon}{0},  
    \]
    which implies $d(T_1, T')<\varepsilon$. Thus, we conclude that $\nhddr{r'}{\varepsilon'}{T_1}\subset \openball{\varepsilon}{T_1}$. 
\end{proof}


\section{Uniformity defined by \texorpdfstring{$d_r$}{dr}}

The family $\{d_r\}_{r>0}$ induces not only a topology but also a quasi-uniformity. Moreover, an even weaker version of the triangle inequality suffices to define the quasi-uniformity.
In particular, this weaker triangle inequality is satisfied in the space of graph embeddings $\embgraph{\mathbb{R}^N}$, as defined in \graphemb. 



Before proceeding, we provide a brief introduction to quasi-uniform spaces, which generalize uniform spaces by not necessarily requiring symmetry. 
For a more detailed treatment of uniform spaces, see \broubakione, Chapter II by Bourbaki, and for quasi-uniform spaces, see \quasiuniform~by Fletcher-Lindgren.

\begin{defi}[\broubakione, Chapter II, \quasiuniform, Chapter 1] \label{definition uniform space}
    Let $X$ be a set. A \textit{uniformity} on $X$ is a collection $\mathcal{U}$ of subsets of $X\times X$ (called \textit{entourages}) satisfying the following five axioms:
    \begin{itemize}
        \item If $U\in \mathcal{U}$ and $U\subset V\subset X\times X$, then $V\in \mathcal{U}$. 
        \item If $U, V\in \mathcal{U}$, then $U\cap V\in \mathcal{U}$. 
        \item If $U\in \mathcal{U}$, then $\Delta\subset U$, where $\Delta=\{(x,x)\in X\times X\mid x\in X\}$ is the diagonal. 
        \item If $U\in \mathcal{U}$, then there exists $V\in \mathcal{U}$ such that $V\circ V\subset U$, where for any two subsets $U, V\subset X\times X$, we define 
        \[
        U\circ V=\{(x,z)\mid \text{there exists $y\in X$ such that $(x,y)\in U$ and $(y,z)\in V$.}\}.
        \]
        \item If $U\in\mathcal{U}$, then $U^{-1}\in \mathcal{U}$, where $U^{-1}=\{(y,x)\in X\times X\mid (x,y)\in U\}\in \mathcal{U}$ for $U\subset X\times X$. 
        \end{itemize}
    A \textit{uniform space} is a set equipped with a uniformity. If the last condition is omitted, $\mathcal{U}$ is called a \textit{quasi-uniformity}, and a set equipped with a quasi-uniformity is called a \textit{quasi-uniform space}. 
\end{defi}

\begin{prop}[\broubakione, Chapter II, \S 1.2, Proposition 1] \label{proposition nhd system uniform space}
  Let $(X,\mathcal{U})$ be a quasi-uniform space. For each $U\in \mathcal{U}$ and $x\in X$, define 
  \[
    U[x]=\{y\in X\mid (x,y)\in U\}, 
  \]
  and let $\mathcal{B}(x)=\{U[x]\mid U\in \mathcal{U}\}$. Then, $\mathcal{B}(x)$ forms a neighborhood system for some topology on $X$, i.e., it satisfies the following conditions:
    \begin{itemize}
      \item For each $x\in X$ and each $U, V\subset X$ satisfying $V\in \mathcal{B}(x)$ and $V\subset U$, we have $U\in \mathcal{B}(x)$. 
      \item For each $x\in X$ and $U_1, U_2\in\mathcal{B}(x)$, we have $U_1\cap U_2\in \mathcal{B}(x)$. 
      \item For each $x\in X$ and $U\in \mathcal{B}(x)$, we have $x\in U$. 
      \item For each $x\in X$ and $U\in \mathcal{B}(x)$, there exists $V\in \mathcal{B}(x)$ such that $U\in\mathcal{B}(y)$ for all $y\in V$. 
    \end{itemize}
\end{prop}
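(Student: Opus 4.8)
The plan is to verify the four listed conditions one at a time, directly from the axioms of a quasi-uniformity, using throughout the elementary facts that $(A\cap B)[x]=A[x]\cap B[x]$ and that $A\subset B$ implies $A[x]\subset B[x]$, for entourages $A,B\subset X\times X$, together with the observation that $\mathcal{U}$ is a filter on $X\times X$ (in particular $X\times X\in\mathcal{U}$, so each $\mathcal{B}(x)$ is nonempty). Three of the four conditions are pure bookkeeping with these slices $A[x]$; only the last one consumes the composition axiom, and that is the single point meriting any care.

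For the first condition (upward closure), given $V\in\mathcal{B}(x)$ I write $V=A[x]$ with $A\in\mathcal{U}$, and for $V\subset U\subset X$ I put $A'=A\cup(\{x\}\times U)$. Then $A\subset A'$ forces $A'\in\mathcal{U}$, while $A'[x]=A[x]\cup U=U$, so $U\in\mathcal{B}(x)$. For the second condition, if $U_1=A_1[x]$ and $U_2=A_2[x]$ with $A_1,A_2\in\mathcal{U}$, then $A_1\cap A_2\in\mathcal{U}$ and $(A_1\cap A_2)[x]=U_1\cap U_2$, so $U_1\cap U_2\in\mathcal{B}(x)$. For the third, I write $U=A[x]$; since $\Delta\subset A$ we have $(x,x)\in A$, hence $x\in A[x]=U$.

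The fourth condition is where I would slow down. Given $U=A[x]$ with $A\in\mathcal{U}$, I use the composition axiom to pick $B\in\mathcal{U}$ with $B\circ B\subset A$, and set $V=B[x]\in\mathcal{B}(x)$. For any $y\in V$ we have $(x,y)\in B$; then for any $z\in B[y]$, i.e.\ $(y,z)\in B$, the pair $(x,z)$ lies in $B\circ B\subset A$, so $z\in A[x]=U$. Thus $B[y]\subset U$ with $B[y]\in\mathcal{B}(y)$, and the already-established upward-closure condition, applied at $y$, yields $U\in\mathcal{B}(y)$, as required.

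Finally I would remark that the symmetry axiom $A\mapsto A^{-1}$ is never invoked, which is precisely why the proposition — and hence the induced topology, whose neighborhood filter at each $x$ is exactly $\mathcal{B}(x)$ — is stated for quasi-uniformities and not only for uniformities. There is no genuine obstacle here: the whole argument is manipulation of entourage slices, and the ``hardest'' step is merely recognizing that the neighborhood-of-a-neighborhood condition is exactly the place where the relation $B\circ B\subset A$ gets used.
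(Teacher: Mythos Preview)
Your proof is correct; the paper itself gives no argument here but merely cites Bourbaki, so there is nothing substantive to compare against. Your verification is the standard one, and your closing observation that symmetry is never invoked matches exactly the remark the paper makes immediately after the proposition.
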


\begin{proof}
    See \broubakione, Chapter II, \S 1.2, Proposition 1. 
\end{proof}

\begin{remark}
    Although \broubakione~states this proposition for uniform spaces, the proof does not rely 
    on the symmetry of the entourages. Thus, the statement also holds for quasi-uniform spaces.
\end{remark}


    
We next introduce a still weaker form of the triangle inequality, which generalizes the weak triangle inequality of \defref{definition axioms of dr}.

\begin{defi} \label{definition weaker triangle inequality}
    Let $\{d_r\}_{r>0}$ be a family satisfying the axioms of \defref{definition axioms of dr}.
    Instead of the weak triangle inequality in \defref{definition axioms of dr}, we assume the following weaker triangle inequality:
    
    \begin{description}
        \item[Weaker triangle inequality]\leavevmode\\
        Let $r_1, r_2, r_3 > 0$. If $d_{r_1+r_2+r_3}(x, y)<r_3$ and $d_{r_1+r_2+r_3}(y, z)<r_2$, then 
        \[
            d_{r_1}(x, z)\leq d_{r_1+r_2+r_3}(x, y)+d_{r_1+r_2+r_3}(y, z).
        \]
    \end{description}
    A set $(X, \{d_r\}_{r>0})$ is said to satisfy the weaker $\{d_r\}$-axioms if it satisfies the axioms in \defref{definition axioms of dr}, 
    except that the weak triangle inequality is replaced by the weaker triangle inequality. 
\end{defi}

\begin{remark}
    The key difference is that in the weak triangle inequality, the term $d_{r_1+r_2}(x, y)$ appears, whereas in the weaker triangle inequality, this is replaced by $d_{r_1+r_2+r_3}(x, y)$.
    The weaker triangle inequality follows from the weak triangle inequality since $d_{r_1+r_2}(x, y) \leq d_{r_1+r_2+r_3}(x, y)$. 

    In \exref{example graphemb}, we will define a family $\{d_r\}_{r>0}$ on the space of graph embeddings $\embgraph{\mathbb{R}^N}$ that satisfies the weaker triangle inequality.
    While the weak triangle inequality may also hold in this space, we are unable to verify it directly.
\end{remark}

The following definition provides the quasi-uniformity induced by the family $\{d_r\}_{r>0}$. 

\begin{prop} \label{proposition definition uniformity of dr space}
    Let $(X, \{d_r\}_{r>0})$ satisfy the weaker $\{d_r\}$-axioms in \defref{definition weaker triangle inequality}.
    We define
    \[
    \entourage{r}{\varepsilon}=\{(x,y)\in X\times X\mid d_r(x,y)<\varepsilon\}. 
    \]
    A subset $U\subset X\times X$ is called an entourage if there exist $r>0$ and $\varepsilon>0$ such that $\entourage{r}{\varepsilon}\subset U$. 

    Then, the collection $\mathcal{U}$ of all such entourages forms a quasi-uniformity on $X$.
    Moreover, if the family $\{d_r\}_{r>0}$ satisfies the symmetry condition, i.e., 
    \[
        d_r(x,y)=d_r(y,x)
    \]
    for all $x, y\in X$ and $r>0$, then in this case, $\mathcal{U}$ forms a uniformity. 
\end{prop}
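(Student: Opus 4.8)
The plan is to verify the four (or five, in the symmetric case) axioms of \defref{definition uniform space} one at a time for the collection $\mathcal{U}$ of sets that contain some $\entourage{r}{\varepsilon}$. The first three axioms are essentially immediate: upward closure holds by the definition of $\mathcal{U}$ as a set of supersets; closure under finite intersection follows because, given $\entourage{r_1}{\varepsilon_1}\subset U$ and $\entourage{r_2}{\varepsilon_2}\subset V$, the set $\entourage{r}{\varepsilon}$ with $r=\min\{r_1,r_2\}$ and $\varepsilon=\min\{\varepsilon_1,\varepsilon_2\}$ is contained in $\entourage{r_1}{\varepsilon_1}\cap\entourage{r_2}{\varepsilon_2}\subset U\cap V$, using the first clause of upper semi-continuity; and $\Delta\subset\entourage{r}{\varepsilon}\subset U$ for every $U\in\mathcal{U}$ by the self-distance axiom, since $d_r(x,x)=0<\varepsilon$.

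The substantive step is the composition axiom: given $U\in\mathcal{U}$, I must produce $V\in\mathcal{U}$ with $V\circ V\subset U$. It suffices to find, for each $\entourage{r_1}{\varepsilon}$, some $\entourage{r}{\varepsilon'}$ with $\entourage{r}{\varepsilon'}\circ\entourage{r}{\varepsilon'}\subset \entourage{r_1}{\varepsilon}$. The idea is to mimic the proof of \proref{proposition basis of topology} but in a "uniform" (basepoint-free) way, which is exactly what the weaker triangle inequality of \defref{definition weaker triangle inequality} is designed for. Concretely: pick $r_2, r_3 > 0$ small enough that $r_1 + r_2 + r_3$ is still a legitimate radius (any positive values work), choose $\varepsilon' \le \min\{r_2, r_3, \varepsilon/2\}$, and set $r = r_1 + r_2 + r_3$. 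Then if $(x,y)\in\entourage{r}{\varepsilon'}$ and $(y,z)\in\entourage{r}{\varepsilon'}$, we have $d_r(x,y) < \varepsilon' \le r_3$ and $d_r(y,z) < \varepsilon' \le r_2$, so the hypotheses of the weaker triangle inequality are met and
\[
    d_{r_1}(x,z) \le d_r(x,y) + d_r(y,z) < 2\varepsilon' \le \varepsilon,
\]
giving $(x,z)\in\entourage{r_1}{\varepsilon}$. Hence $\entourage{r}{\varepsilon'}\circ\entourage{r}{\varepsilon'}\subset\entourage{r_1}{\varepsilon}$, and taking $V$ to be the $\mathcal{U}$-member generated by $\entourage{r}{\varepsilon'}$ finishes this axiom. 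I expect this to be the main (and really the only) obstacle — the point being to choose the relation between $r$, $r_1$, $r_2$, $r_3$, $\varepsilon$, $\varepsilon'$ so that the weaker triangle inequality applies; the second clause of upper semi-continuity is not even needed here, only the monotonicity clause implicitly through $d_{r_1}\le d_r$.

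Finally, for the symmetric case I must check the inverse axiom: if $U\in\mathcal{U}$ with $\entourage{r}{\varepsilon}\subset U$, then $U^{-1}\supset\entourage{r}{\varepsilon}^{-1}=\{(y,x)\mid d_r(x,y)<\varepsilon\}=\{(y,x)\mid d_r(y,x)<\varepsilon\}=\entourage{r}{\varepsilon}$, where the middle equality is precisely the symmetry hypothesis $d_r(x,y)=d_r(y,x)$. Thus $\entourage{r}{\varepsilon}\subset U^{-1}$, so $U^{-1}\in\mathcal{U}$, and $\mathcal{U}$ is a genuine uniformity. I would remark in passing that this quasi-uniformity induces the topology of \defref{definition neighborhoods of topology}, since $\entourage{r}{\varepsilon}[x]=\nhddr{r}{\varepsilon}{x}$ by construction, so the neighborhood system of \proref{proposition nhd system uniform space} agrees with the topological basis of \proref{proposition basis of topology}.
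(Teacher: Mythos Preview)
Your approach is essentially the same as the paper's, and the composition and symmetry arguments are correct (the paper makes the specific choice $r'=3r_1$, $\varepsilon'=\min\{\varepsilon/2,r_1\}$, which is your argument with $r_2=r_3=r_1$, but the idea is identical).

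There is one concrete slip in the intersection axiom: you take $r=\min\{r_1,r_2\}$, but this goes the wrong way. Monotonicity says $d_{r_i}\le d_r$ only when $r_i\le r$, so from $d_r(x,y)<\varepsilon$ you can conclude $d_{r_i}(x,y)<\varepsilon_i$ only if $r\ge r_i$ for both $i$. You need $r=\max\{r_1,r_2\}$ (the paper uses $r_1+r_2$, which works for the same reason). With $r=\min\{r_1,r_2\}$ the inclusion $\entourage{r}{\varepsilon}\subset\entourage{r_1}{\varepsilon_1}\cap\entourage{r_2}{\varepsilon_2}$ is generally false.
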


\begin{proof}
    We verify that the collection $\mathcal{U}$ satisfies the four conditions required for a quasi-uniformity.

    First, let $U \in \mathcal{U}$ and let $V \subset X \times X$ be any subset satisfying $U \subset V$. 
    By definition of $\mathcal{U}$, there exist $r > 0$ and $\varepsilon > 0$ such that $\entourage{r}{\varepsilon} \subset U$. 
    Since $U \subset V$, it follows that $\entourage{r}{\varepsilon} \subset V$. 
    By definition of $\mathcal{U}$, this implies that $V \in \mathcal{U}$.

    Next, we verify that $\mathcal{U}$ is closed under intersections.
    Let $U_1, U_2 \in \mathcal{U}$ be two entourages. 
    By definition, there exist $r_1, r_2 > 0$ and $\varepsilon_1, \varepsilon_2 > 0$ such that 
    \[
    \entourage{r_1}{\varepsilon_1} \subset U_1 \text{ and } \entourage{r_2}{\varepsilon_2} \subset U_2.
    \]
    Thus, we have $\entourage{r_1+r_2}{\min\{\varepsilon_1, \varepsilon_2\}}\subset\entourage{r_1}{\varepsilon_1}\cap \entourage{r_2}{\varepsilon_2}\subset U_1\cap U_2$. 
    It follows that $U_1 \cap U_2 \in \mathcal{U}$.

    Now, we show that each entourage contains the diagonal.
    Let $U \in \mathcal{U}$. There exist $r>0$ and $\varepsilon>0$ such that $\entourage{r}{\varepsilon}\subset U$. 
    Since $d_r(x,x)=0<\varepsilon$ for all $x\in X$, we have $(x,x)\in \entourage{r}{\varepsilon}\subset U$. 

    Finally, we verify the last condition.  
    Let $U \in \mathcal{U}$. There exist $r>0$ and $\varepsilon>0$ such that $\entourage{r}{\varepsilon}\subset U$. 
    Set $r' = 3r$, $\varepsilon' = \min\{\varepsilon/2, r\}$, and $V=\entourage{r'}{\varepsilon'}\in \mathcal{U}$. 
    Thus, we have 
    \[
    V\circ V\subset U.
    \]
    Indeed, let $(x,z)\in V\circ V$ be arbitrary. There exists $y\in X$ such that $(x,y)\in V$ and $(y,z)\in V$. 
    By definition of $V$, we have $d_{r'}(x,y)<\varepsilon'$ and $d_{r'}(y,z)<\varepsilon'$. 
    By these inequalities, we have 
    \[
    d_{2r+\varepsilon'}(x,y)\leq d_{r'}(x,y)<\varepsilon'\leq r, \text{ and } d_{2r+\varepsilon'}(y,z)\leq d_{r'}(y,z)<\varepsilon'.
    \]
    Therefore, by the weaker triangle inequality, we have 
    \[
    d_r(x,z)\leq d_{2r+\varepsilon'}(x,y)+d_{2r+\varepsilon'}(y,z)<\varepsilon/2+\varepsilon/2=\varepsilon,  
    \]
    which implies that $(x,z)\in \entourage{r}{\varepsilon}\subset U$. 

    Moreover, we show that if symmetry is assumed, then the quasi-uniformity $\mathcal{U}$ is in fact a uniformity. 
    Let $U \in \mathcal{U}$. There exist $r > 0$ and $\varepsilon > 0$ such that $\entourage{r}{\varepsilon} \subset U$. 
    For each $(y, x) \in \entourage{r}{\varepsilon}$, we have $d_r(y, x) = d_r(x, y) < \varepsilon$. 
    Therefore, it follows that $(y, x) \in U^{-1}$, which implies that $U^{-1} \in \mathcal{U}$.
\end{proof}

As an example, we show how the family $\{d_r\}_{r>0}$, satisfying the weaker triangle inequality, can describe the \textit{$C^0$-topology} on the space $\embgraph{\mathbb{R}^N}$ of all graphs in $\mathbb{R}^N$, as defined in \graphemb. 
For consistency, we follow the definitions and notation in \graphemb~without restating them here. 


\begin{ex} \label{example graphemb}
    Let $\embgraph{\mathbb{R}^N}$ be the space of all graphs $G$ defined in \graphemb. 
    For each $r>0$ and $G, H\in \embgraph{\mathbb{R}^N}$, we define 
    \[
      \diffeomfdemb{r}{G}{H}=\left\{\varphi\colon U\to\varphi(U)\;\middle|
      \begin{array}{l}
          \mbox{$U\subset H$ and $\varphi(U)\subset G$ are both open,} \\
          \mbox{$(U, \varphi(U), \varphi)$ is a triple (defined in \graphemb, Definition 2.4),} \\
          \mbox{$H\cap \closedball{r}\subset U$, $G\cap \closedball{r}\subset \varphi(U)$.}
      \end{array}
      \right\}.
    \]
    Moreover, for each $\varphi\in \diffeomfdemb{r}{G}{H}$, 
    we define $\domain{\varphi}{r}=(H\cap \closedball{r})\cup \varphi^{-1}(G\cap \closedball{r})$, and 
    \[
      d_r'(\varphi)=\sup_{p\in \domain{\varphi}{r}}|p-\varphi(p)|.
    \]
    If $\domain{\varphi}{r}=\emptyset$, 
    we set $d'_r(\varphi) = 0$. 
    Under this setting, we define 
    \[
      d_r(G, H)=\inf_{\varphi\in\diffeomfdemb{r}{G}{H}}d_r'(\varphi)
    \]
    for each $r>0$. If $\diffeomfdemb{r}{G}{H}=\emptyset$, 
    we set $d_r(G, H) = \infty$. 
    This family $\{d_r\}_{r>0}$ satisfies the weaker $\{d_r\}$-axioms in \defref{definition weaker triangle inequality}.
    The topology induced by $\{d_r\}_{r>0}$ coincides with the $C^0$-topology defined in \graphemb, Definition 2.5. 
\end{ex}

\begin{remark}{}{} \label{remark the order of triple}
    In this example, the family $\{d_r\}_{r>0}$ may not satisfy the symmetry condition $d_r(G, H)=d_r(H, G)$. 
    Moreover, note that the domain of the triple $\varphi$ in $\diffeomfdemb{r}{G}{H}$ is an open subset of $H$, not $G$. 
    Additionally, \graphemb~defines a \textit{$C^1$-topology} for the space $\embgraph{\mathbb{R}^N}$. 
    However, regardless of how one defines the family $\{d_r\}_{r>0}$, it seems difficult to recover this topology from it.
\end{remark}

\begin{proof}
    Although the proof is largely similar to Lemma 4.2, in \metricemb, some conditions have been modified. Therefore, we provide the proof here for completeness. 

    The self-distance axiom is immediate. 
    
    We now prove the upper semi-continuity. 
    Since $r_1\leq r_2$ implies $d_{r_1}(G, H)\leq d_{r_2}(G, H)$ straightforwardly, we proceed to prove the other condition. 

    Let $r>0$, $G, H\in \embgraph{\mathbb{R}^N}$, and $\varepsilon>0$ be arbitrary satisfying $d_r(G, H)<\varepsilon$. 
    Then, there exists a map $\varphi\in\diffeomfdemb{r}{G}{H}$ with $\varphi\colon U\to \varphi(U)$ satisfying $|p-\varphi(p)|<\varepsilon$ for all $p\in \domain{\varphi}{r}$. 
    Since $U$ is open in $H$, the set 
    \[
    A=\{p\in U\mid |p-\varphi(p)|<\varepsilon\} 
    \]
    is also open in $H$. This open set contains $\domain{\varphi}{r}=(H\cap \closedball{r})\cup \varphi^{-1}(G\cap \closedball{r})$. 
    Therefore, there exist $\delta_1>0$ and $\delta_2>0$ such that $H\cap \closedball{(r+\delta_1)}\subset A$ and $G\cap \closedball{(r+\delta_2)}\subset\varphi(A)$. 
    Setting $\delta=\min\{\delta_1,\delta_2\}$, we obtain $\domain{\varphi}{r+\delta}\subset A$, which implies $\varphi \in \diffeomfdemb{r+\delta}{G}{H}$. 
    Thus, 
    \[
        d_{r+\delta}(G, H)\leq d_{r+\delta}'(\varphi)<\varepsilon. 
    \]

    Next, we prove the weaker triangle inequality. 
    Let $G_1, G_2, G_3\in\Phi(\mathbb{R}^N)$, and $r_1, r_2, r_3 > 0$ be arbitrary. 
    Suppose that  
    \[
        d_{r_{123}}(G_1, G_2) < r_3, \quad d_{r_{123}}(G_2, G_3) < r_2.
    \]
    To establish the weaker triangle inequality, let $ d_3, d_2 > 0 $ be arbitrary such that  
    \[
        d_{r_{123}}(G_1, G_2)<d_3<r_3, \quad d_{r_{123}}(G_2,G_3)<d_2<r_2.
    \]
    We need to show that  
    \[
        d_{r_1}(G_1,G_3)\leq d_3+d_2.
    \]
    There exist two maps 
    $(\varphi_1\colon U_1\to \varphi_1(U_1))\in \diffeomfdemb{r_{123}}{G_1}{G_2}$ and $(\varphi_2\colon U_2\to \varphi_2(U_2))\in \diffeomfdemb{r_{123}}{G_2}{G_3}$ 
    such that 
    \[
        |p_2-\varphi_1(p_2)|<d_3 \mbox{ for all } p_2\in \domain{\varphi_1}{r_{123}},
    \]
    \[
        |p_3-\varphi_2(p_3)|<d_2 \mbox{ for all } p_3\in\domain{\varphi_2}{r_{123}}.
    \]
    For any $p_3\in\domain{\varphi_2}{r_{13}}$, we have either $p_3\in G_3\cap \closedball{r_{13}}$ or $\varphi_2(p_3)\in G_2\cap \closedball{r_{13}}\subset G_2\cap\closedball{r_{123}}$. 
    Even if $p_3\in G_3\cap \closedball{r_{13}}$, the inequality $|p_3-\varphi_2(p_3)|<d_2<r_2$ implies that $\varphi_2(p_3)\in G_2 \cap\closedball{r_{123}}$. 
    Therefore, for any $p_3\in\domain{\varphi_2}{r_{13}}$, we obtain $\varphi_2(p_3)\in G_2\cap \closedball{r_{123}}$, 
    which allows us to restrict the domain of $\varphi_2$ to a smaller neighborhood $U$ that contains $\domain{\varphi_2}{r_{13}}$ and on which the composition $\varphi_1 \circ \varphi_2$ is well-defined.
    
    For this composition $\varphi_1\circ \varphi_2$, we have $\varphi_1\circ \varphi_2\in \diffeomfdemb{r_1}{G_1}{G_3}$. 
    Indeed, let $p_1\in G_1\cap \closedball{r_1}\subset G_1\cap \closedball{r_{123}}\subset\varphi_1(U_1)$ be arbitrary. 
    There exists $p_2\in U_1$ such that $p_1=\varphi_1(p_2)$. 
    Since $|\varphi_1(p_2)-p_2|<d_3<r_3$ holds, we have $p_2\in G_2\cap \closedball{r_{13}}\subset \varphi_2(U)$. 
    Therefore, we obtain $G_1\cap \closedball{r_1}\subset \varphi_1\circ \varphi_2(U)$. 

    For this composition $\varphi_1\circ \varphi_2$, we have 
    \[
        |p_3-\varphi_1\circ \varphi_2(p_3)|\leq |\varphi_1\circ\varphi_2(p_3)-\varphi_2(p_3)|+|p_3-\varphi_2(p_3)|\leq d_3+d_2, 
    \]
    for any $p_3\in \domain{\varphi_1\circ \varphi_2}{r_1}$. This implies that 
    \[
        d_{r_1}(G_1,G_3)\leq d_{r_1}'(\varphi_1\circ \varphi_2)\leq d_3+d_2. 
    \]

    Finally, we prove that the topology induced by $\{d_r\}_{r>0}$ coincides with the $C^0$-topology defined in \graphemb, Definition 2.5. 
    First, we show that for any $G\in\embgraph{\mathbb{R}^N}$, any compact set $K\subset \mathbb{R}^N$, and any $\varepsilon>0$, there exist $r'>0$ and $\varepsilon'>0$ such that 
    \[
      \entourage{r'}{\varepsilon'}[G]\subset \nhdgalatius{K}{\varepsilon}{G}. 
    \]
    Since $K$ is compact, there exists $r'>0$ such that $K\subset \closedball{r'}$. 
    Setting $\varepsilon'=\varepsilon$, we obtain the desired inclusion. 
    Indeed, for any $H\in \entourage{r'}{\varepsilon'}[G]$, 
    there exists $\varphi\colon U\to \phi(U)$ in $\diffeomfdemb{r'}{G}{H}$  
    such that $|p-\varphi(p)|<\varepsilon$ for all $p\in \domain{\varphi}{r'}$. 
    Since $\varphi\in\diffeomfdemb{r'}{G}{H}$, we have $K\cap G\subset \closedball{r'}\cap G\subset \varphi(U)$. 
    Furthermore, for any $p\in K^{\varepsilon} \cap H\subset \closedball{r'}\cap H$, we obtain $|\varphi(p)-p|<\varepsilon'=\varepsilon$, which implies that $\varphi(p)\in K$. 
    Therefore, we conclude that $K^{\varepsilon} \cap H\subset \varphi^{-1}(K)$. 
    Thus, $(U, \varphi(U),\varphi)$ is a triple $\varphi \colon H \dashrightarrow G$ that is $(\varepsilon, K)$-small, implying that $H\in \nhdgalatius{K}{\varepsilon}{G}$. 

    Moreover, we show that for any $G\in\embgraph{\mathbb{R}^N}$, any $r>0$ and $\varepsilon>0$, there exist a compact set $K\subset \mathbb{R}^N$ and a number $\varepsilon'>0$ such that  
    \[
        \nhdgalatius{K}{\varepsilon'}{G}\subset \entourage{r}{\varepsilon}[G]. 
    \]
    Set $K=\closedball{(r+\varepsilon)}$ and $\varepsilon'=\varepsilon$. Then, we obtain the desired inclusion. 
    Indeed, for any $H\in \nhdgalatius{K}{\varepsilon'}{G}$, there exists a triple $(V',V ,\varphi)$ that is $(\varepsilon, K)$-small, meaning that 
    \[
        K\cap G\subset V, \quad K^{\varepsilon'}\cap H\subset \varphi^{-1}(K), \mbox{ and } |p-\varphi(p)|<\varepsilon' \mbox{ for all } p\in \varphi^{-1}(K).
    \]
    For the given triple $(V',V ,\varphi)$, we obtain 
    \[
        K\cap G\subset V=\varphi(V'), \mbox{ and } \closedball{r}\cap H\subset K^{\varepsilon'}\cap H\subset \varphi^{-1}(K)\subset V'.
    \]
    Therefore, this triple belongs to $\diffeomfdemb{r}{G}{H}$ with $d_r'(\varphi)<\varepsilon'=\varepsilon$, which implies that $H\in \entourage{r}{\varepsilon}[G]$. 
\end{proof}

The family $\{d_r\}_{r>0}$ induces a pseudo-metric that is not necessarily symmetric. 

\begin{defi} \label{definition non-symmetric pseudo-metric}
  Let $X$ be a set. A \textit{quasi-pseudo-metric} on $X$ is a function $d\colon X\times X\to \mathbb{R}_{\geq 0}$ satisfying the following:
  \begin{description}
    \item[Self-distance Axiom] $d(x,x)=0$ for all $x\in X$. 
    \item[Triangle inequality] $d(x, z)\leq d(x, y)+d(y, z)$ for all $x, y, z\in X$.  
  \end{description}
  If the following condition is also satisfied, then we call $d$ a \textit{quasi-metric}: 
  \begin{description}
    \item[Non-degeneracy] $d(x, y)=0$ implies $x=y$.  
  \end{description}

  Given a quasi-pseudo-metric $d$ on a set $X$, we define
  \[
    V_{\varepsilon}=\{(x,y)\in X\times X\mid d(x, y)< \varepsilon\}
  \]
  for each $\varepsilon>0$. A subset $U\subset X\times X$ is called an entourage if there exists $\varepsilon>0$ such that $V_{\varepsilon}\subset U$. 
  Then, the collection $\mathcal{U}$ of all such entourages forms a quasi-uniformity on $X$. 
\end{defi}

\begin{thm} \label{theorem pseudo-metric of dr space}
    Let $(X, \{d_r\}_{r>0})$ satisfy the weaker $\{d_r\}$-axioms in \defref{definition weaker triangle inequality}. 
    Then there exists a quasi-pseudo-metric $d$ on $X$ that induces the quasi-uniformity defined in \proref{proposition definition uniformity of dr space}. 
    Moreover, if the family $\{d_r\}_{r>0}$ is non-degenerate, i.e., 
    \[
        d_r(x, y)=0 \text{ for all } r>0\implies x=y, 
    \]
    then $d$ can be chosen to be a quasi-metric on $X$. 
\end{thm}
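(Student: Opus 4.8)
The plan is to deduce $d$ from the general metrization theory of quasi-uniformities, once we note that $\mathcal{U}$ (from \proref{proposition definition uniformity of dr space}) has a countable base. The first step is to check that $\{\entourage{n}{1/n}\}_{n\ge 1}$ is a base of $\mathcal{U}$: each $\entourage{n}{1/n}$ belongs to $\mathcal{U}$, and given any entourage $U\supset\entourage{r}{\varepsilon}$, for every integer $n\ge\max\{r,1/\varepsilon\}$ the monotonicity part of Upper semi-continuity gives $d_r(x,y)\le d_n(x,y)<1/n\le\varepsilon$ whenever $(x,y)\in\entourage{n}{1/n}$, so $\entourage{n}{1/n}\subset\entourage{r}{\varepsilon}\subset U$. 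By the quasi-pseudo-metrization theorem for quasi-uniformities (Fletcher--Lindgren \quasiuniform; equivalently the metrization lemma of Bourbaki \broubakitwo, Chapter~IX, whose proof uses no symmetry of the entourages), a quasi-uniformity with a countable base is induced by a quasi-pseudo-metric. This produces a quasi-pseudo-metric $d$ on $X$ whose induced quasi-uniformity $\mathcal{U}_d$ (\defref{definition non-symmetric pseudo-metric}) equals $\mathcal{U}$, giving the first assertion.

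For the non-degeneracy clause, the argument can be run at the level of the intersection of all entourages. For any quasi-uniformity the intersection of all its entourages equals the intersection of the members of any base; hence $\bigcap\mathcal{U}_d=\bigcap_{\varepsilon>0}\{(x,y)\mid d(x,y)<\varepsilon\}=\{(x,y)\mid d(x,y)=0\}$, while $\bigcap\mathcal{U}=\bigcap_{r,\varepsilon>0}\entourage{r}{\varepsilon}=\{(x,y)\mid d_r(x,y)=0 \text{ for every } r>0\}$. Since $\mathcal{U}_d=\mathcal{U}$ these sets agree, so $d(x,y)=0$ precisely when $d_r(x,y)=0$ for all $r>0$; under the non-degeneracy hypothesis this forces $x=y$, so $d$ is a quasi-metric. (No second choice of $d$ is needed: in this situation every quasi-pseudo-metric inducing $\mathcal{U}$ is automatically a quasi-metric.)

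An explicit $d$, if desired, follows from the same strategy via Frink's chain construction: set $U_n=\entourage{3^n}{2^{-n}}$ for $n\ge 1$ and $U_0=X\times X$. The computation already carried out in the proof of \proref{proposition definition uniformity of dr space} (with $r=3^n$ and $\varepsilon=2^{-n}$) shows $U_{n+1}\circ U_{n+1}\subset U_n$, and since $U_m\subset U_{m-1}$ by monotonicity one gets $U_m\circ U_m\circ U_m\subset U_{m-2}$; hence the subsequence $V_k:=U_{2k}$ satisfies $V_0=X\times X$ and $V_{k+1}\circ V_{k+1}\circ V_{k+1}\subset V_k$, so the metrization lemma yields $d$ with $V_k\subset\{(x,y)\mid d(x,y)<2^{-k}\}\subset V_{k-1}$, whence $\mathcal{U}_d=\mathcal{U}$ and, by the same intersection argument, the quasi-metric refinement. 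In either route the one genuinely nontrivial ingredient is the metrization lemma itself---specifically the estimate bounding the chain infimum below by half the layer function---and the only point at which the axioms on $\{d_r\}_{r>0}$ enter is the entourage-composition inequality, i.e. the weaker-triangle-inequality argument. When each $d_r$ is symmetric, every $U_n$ (hence $V_k$) is symmetric, so $d$ comes out symmetric and one recovers a genuine metric, which is the promised simpler route to \metricemb, Theorem~4.4.
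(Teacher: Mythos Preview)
Your proof is correct and follows essentially the same route as the paper: both invoke the Bourbaki/Frink chain-metrization lemma after producing a countable base $\{W_n\}$ of entourages with $W_{n+1}\circ W_{n+1}\circ W_{n+1}\subset W_n$ (the paper takes $W_n=\entourage{3^{2n}}{3^{-n}}$ directly, while you reach the same situation via $U_n=\entourage{3^n}{2^{-n}}$ and a subsequence). Your intersection-of-entourages argument for the non-degeneracy clause is a clean repackaging of the paper's explicit verification, and your remark that symmetry of each $d_r$ makes the resulting $d$ symmetric matches the paper's \remref{remark symmetric metric}.
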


\begin{proof}
    The following argument is based on Proposition 2 in Chapter IX, Section 1.4 of \broubakitwo~by Bourbaki. 
    
    We define $W_n=\entourage{3^{2n}}{3^{-n}}=\{(x,y)\in X\times X\mid d_{3^{2n}}(x,y)<3^{-n}\}$ for each $n\in \mathbb{Z}_{\geq 0}$. 
    We now show that the following inclusion holds for each $n\in \mathbb{Z}_{\geq 0}$: 
    \[
        W_{n+1}\circ W_{n+1}\circ W_{n+1}\subset W_n. 
    \]
    Let $(x, w)\in W_{n+1}\circ W_{n+1}\circ W_{n+1}$ be arbitrary. 
    Then, there exist $y, z\in X$ such that $(x, y)$, $(y, z)$, and $(z, w)$ are all in $W_{n+1}$. 
    By applying the weaker triangle inequality repeatedly, we obtain
    \[
    \begin{aligned}
        &d_{3^{2n}}(x, w)\leq d_{3^{2n+1}}(x, y) + d_{3^{2n+1}}(y, w)\leq d_{3^{2n+2}}(x, y) + d_{3^{2n+1}}(y, w)\\
        &\leq d_{3^{2n+2}}(x, y) + d_{3^{2n+2}}(y, z) + d_{3^{2n+2}}(z, w)\leq 3^{-n-1}+3^{-n-1}+3^{-n-1}=3^{-n}. 
    \end{aligned}
    \]
    Using these $\{W_n\}_{n\in \mathbb{Z}_{\geq 0}}$, we define a function $g\colon X\times X\to \mathbb{R}_{\geq 0}$ by
    \[
        g(x, y) = 
        \begin{cases}
            0 & \text{if } (x, y) \in W_n \text{ for all } n \geq 1, \\
            2^{-k} & \text{if } (x, y) \in W_n \text{ for } 1 \leq n \leq k \text{ and } (x, y) \notin W_{k+1}, \\
            1 & \text{if } (x, y) \notin W_1.
        \end{cases}
    \]
    for $(x, y)\in X\times X$. Also, we define a function $d\colon X\times X\to \mathbb{R}_{\geq 0}$ by 
    \[
        d(x, y)=\inf \sum_{i=0}^{p-1} g(z_i, z_{i+1}), 
    \]
    where the infimum is taken over all finite sequences $\{z_i\}_{0\leq i\leq p}$ such that $z_0=x$ and $z_p=y$. 
    We shall show that $d$ is a quasi-pseudo-metric on $X$ which satisfies the following inequalities:
    \[
        \dfrac{1}{2}\,g(x, y)\leq d(x, y)\leq g(x, y).
    \]

        
    It is straightforward to verify that $d$ is a quasi-pseudo-metric on $X$.  
    Since the inequality $d(x, y) \leq g(x, y)$ follows directly from the definition, we only need to prove the inequality $\frac{1}{2} g(x, y)\leq d(x, y)$.  
    To this end, we prove the following by induction on $p \in \mathbb{Z}_{>0}$:

For any $x, y\in X$ and any finite sequence $\{z_i\}_{0 \leq i \leq p}$ in $X$ with $z_0 = x$ and $z_p = y$, we have
\[
    \dfrac{1}{2}\,g(x, y)\leq \sum_{i=0}^{p-1} g(z_i, z_{i+1}).
\]
The case $p=1$ is immediate. Suppose that the statement holds for every $p' < p$. We now prove the case $p$.
We define $a=\sum_{i=0}^{p-1} g(z_i, z_{i+1})$. If $a\geq 1/2$, then the desired inequality holds trivially.
If $a=0$, the claim is clear. Thus assume $0<a<1/2$.

If there exists $q \in \{1,\dots,p-1\}$ such that
\[
    \sum_{i=0}^{q-1} g(z_i, z_{i+1}) \leq \dfrac{a}{2},
\]
let $h$ be the largest such integer; otherwise, set $h=0$.
Then
\[
    \sum_{i=0}^{h-1}g(z_i, z_{i+1})\leq\dfrac{a}{2}\quad \text{and} \quad \sum_{i=0}^h g(z_i, z_{i+1})>\dfrac{a}{2},
\]
where the first sum is understood to be $0$ when $h=0$.
It follows that $\sum_{i=h+1}^{p-1} g(z_i, z_{i+1})< a/2$.

Applying the inductive hypothesis to each nontrivial one of the two subsequences
$z_0,\dots,z_h$ and $z_{h+1},\dots,z_p$ (and using $g(x,x)=g(y,y)=0$ in the trivial case),
we obtain $g(x, z_h)\leq a$ and $g(z_{h+1}, y)\leq a$. It is also clear that $g(z_h, z_{h+1})\leq a$.

Let $k\in\mathbb{Z}_{\geq 1}$ be the smallest integer such that $2^{-k}\leq a$.
By the definition of $g$, it follows that $(x, z_h),\ (z_h, z_{h+1})$, and $(z_{h+1}, y)$ all belong to $W_k$.
Therefore, we obtain 
\[
    (x, y)\in W_k\circ W_k\circ W_k\subset W_{k-1},
\]
which implies that $g(x, y)\leq 2^{1-k}\leq 2a$.

    Using these inequalities, we now show that the quasi-uniformity $\mathcal{U}$ induced by the family $\{d_r\}_{r>0}$ coincides with the quasi-uniformity $\mathcal{U}'$ induced by $d$. 
    Let $U\in \mathcal{U}$ be an arbitrary entourage. 
    Then there exist $r>0$ and $\varepsilon>0$ such that $\entourage{r}{\varepsilon}\subset U$. 
    For these $r$ and $\varepsilon$, choose $k\in\mathbb{Z}_{\geq 0}$ such that $3^{2k}>r$, $3^{-k}<\varepsilon$. 
    Since $g(x, y)\leq 2d(x, y)$ for any $x, y\in X$, we have 
    \[
        V_{2^{-k-1}}\subset W_k\subset \entourage{r}{\varepsilon}\subset U. 
    \]
    Indeed, for any $(x, y)\in V_{2^{-k-1}}$, we have $g(x, y)\leq 2d(x, y)< 2^{-k}$, which implies $(x, y)\in W_k$. 
    Therefore, $U$ is also in $\mathcal{U}'$. 

    Conversely, let $U\in \mathcal{U}'$ be an arbitrary entourage. 
    Then there exists $\varepsilon>0$ such that $V_{\varepsilon}\subset U$. Choose $k\in\mathbb{Z}_{\geq 0}$ such that $2^{-k}< \varepsilon$. 
    Since $d(x, y)\leq g(x, y)$ for any $x, y\in X$, we have
    \[
        \entourage{3^{2k}}{3^{-k}}=W_k\subset V_{\varepsilon}.
    \]
    Indeed, for any $(x, y)\in W_k$, we have $d(x, y)\leq g(x, y)\leq 2^{-k}<\varepsilon$. Therefore, $U$ is also in $\mathcal{U}$. 

    Finally, we prove that $d(x, y) = 0$ implies $x = y$ under the assumption that the family $\{d_r\}_{r>0}$ is non-degenerate. 
    Let $x, y\in X$ be arbitrary with $d(x, y)=0$, and let $r>0$ and $\varepsilon>0$ be arbitrary. Choose $k\in \mathbb{Z}_{\geq 1}$ such that $W_k\subset \entourage{r}{\varepsilon}$. 
    Since $g(x, y)\leq 2d(x, y)=0$, we obtain $(x, y)\in W_n$ for all $n\in\mathbb{Z}_{\geq 1}$, and in particular $(x, y)\in W_k$.  
    Hence, $(x, y)\in \entourage{r}{\varepsilon}$, it follows that $d_r(x, y)< \varepsilon$. 
    Since $r$ and $\varepsilon$ were arbitrary, this means $d_r(x, y)=0$ for all $r>0$. 
    By the non-degeneracy of $\{d_r\}_{r > 0}$, we conclude that $x = y$.
\end{proof}

\begin{remark}{}{} \label{remark symmetric metric}
    If each $d_r$ is symmetric, then the family $\{d_r\}_{r>0}$ induces a genuine uniformity rather than just a quasi-uniformity.
    In this case, Proposition 2 in Chapter IX, Section 1.4 of \broubakitwo~applies directly and ensures the existence of a pseudo-metric $d$ that induces the original uniformity.
    Moreover, if each $d_r$ is symmetric, then the topology induced by the family $\{d_r\}_{r>0}$ satisfies the $T_1$ separation axiom. In fact, it is even regular (\metricemb, Lemma 4.3).
    As a consequence, the pseudo-metric $d$ constructed in the proof must actually be a metric. 
\end{remark}

\begin{ex} \label{example submanifold metrizable}
  The family $\{d_r\}_{r>0}$ on the space of submanifolds $\embmfd{d}{N}$ in \exref{example submanifold} is symmetric. 
  Therefore, by \thref{theorem pseudo-metric of dr space} and \remref{remark symmetric metric}, there exists a metric on $\embmfd{d}{N}$ which induces both the uniformity and the topology on $\embmfd{d}{N}$. 
  This provides a simpler way to recover the result of \metricemb, Theorem 4.4.
\end{ex}

\section{Diffeology defined by \texorpdfstring{$d_r$}{dr}}

In this section, we introduce a diffeology defined by the family $\{d_r\}_{r>0}$. 
We begin by defining \textit{diffeological spaces} and see their basic properties. 
All definitions and properties presented here are based on \IZ~by Iglesias-Zemmour, the standard textbook of diffeology. 
For more details on diffeological spaces, see \IZ. 

\begin{defi}[\IZ, 1.5] \label{definition diffeology}
  Let $X$ be a set. A \textit{parametrization} of $X$ is a map $U\to X$ where $U$ is an open set of $\mathbb{R}^n$ for some $n\in\mathbb{Z}_{\geq 0}$.
  A \textit{diffeology} on $X$ is a set $\mathscr{D}_X$ of parametrizations (whose elements are called \textit{plots}) satisfying the following three axioms:
  \begin{description}
    \item[Covering] Every constant parametrization $U\to X$ is a plot.
    \item[Locality] Let $p\colon U\to X$ be a parametrization. If there exists an open covering $\{U_{\alpha}\}$ of $U$ such that ${p|}_{U_{\alpha}}\in \mathscr{D}_X$ for all $\alpha$, then $p$ itself is a plot.  
    \item[Smooth compatibility] For every plot $p\colon U\to X$, every open set $V$ in $\mathbb{R}^m$ and $f\colon V\to U$ that is smooth as a map between Euclidean spaces, $p\circ f\colon V\to X$ is also a plot.
  \end{description}
  A \textit{diffeological space} is a set equipped with a diffeology.
  We usually write the underlying set $X$ to represent a diffeological space $(X,\mathscr{D}_X)$.
\end{defi}

Throughout this paper, we refer to any open subset of $\mathbb{R}^n$ (for some $n$) as a \textit{Euclidean open set}.  
We also denote the domain of a plot $p$ by $U_p$ and call $p$ an \textit{$n$-plot} if its domain $U_p$ is an open subset of $\mathbb{R}^n$.

\begin{defi}[\IZ, 1.14] \label{definition smooth map}
  Let $X$ and $Y$ be diffeological spaces. 
  A map $f\colon X\to Y$ is said to be \textit{smooth} if for every plot $p\colon \plotdom{p}\to X$, the composition $f\circ p\colon \plotdom{p}\to Y$ is a plot of $Y$.
  The set of smooth maps from $X$ to $Y$ is denoted by $\smoothmap{X}{Y}$. 
  If $X$ is a Euclidean open set, then $C^\infty(X,Y)$ coincides with the set of all plots $X\to Y$ (\IZ, 1.16).
\end{defi}

Diffeological spaces and smooth maps between them form a category denoted by $\dflg$.
An isomorphism in this category is called a \textit{diffeomorphism}.  
The category $\dflg$ contains the category $\mfd$ of smooth manifolds and smooth maps as a full subcategory. 
In particular, $\dflg$ also contains the category $\encl$ of Euclidean open sets and smooth maps as a full subcategory. This follows from the next example. 

\begin{ex} \label{example definition manifold diffeology}
  Let $M$ be a smooth manifold, and let $\mathscr{D}_M$ be the set of all smooth maps from some Euclidean open set $U$ to $M$ (in the sense of smooth maps between manifolds).
  Then $\mathscr{D}_M$ forms a diffeology on $M$. 
\end{ex}

\begin{defi}[\IZ, 2.8] \label{definition D-topology}
  Let $X$ be a diffeological space. The largest topology on $X$ that makes all plots continuous is called the \textit{$D$-topology} on $X$. 
  With this construction, we define the functor $D$ from the category $\dflg$ to the category of topological spaces $\topsp$.
\end{defi}

It is known that the functor $D$ has the right adjoint $\conti$ (Shimakawa-Yoshida-Haraguchi \SH, Proposition 2.1).
For a manifold $M$ equipped with the standard diffeology (\exref{example definition manifold diffeology}), the $D$-topology of $M$ coincides with the standard topology of the manifold $M$. 

\begin{defi}[\IZ, 1.66] \label{definition generating family}
    Let $X$ be a set, and let $\mathscr{F}$ be a family of parametrizations of $X$. Then, there exists the smallest diffeology $\mathscr{D}$ on $X$ that contains all parametrizations of $\mathscr{F}$ as plots. 
    We call this diffeology $\mathscr{D}$ the \textit{diffeology generated by} $\mathscr{F}$. 
    A parametrization $p\colon \plotdom{p}\to X$ is a plot of $\mathscr{D}$ if and only if it satisfies the following condition:
    there exists an open covering $\{U_{\alpha}\}$ of $\plotdom{p}$ such that for each $\alpha$, either ${p|}_{U_{\alpha}}$ is a constant parametrization 
    or ${p|}_{U_{\alpha}}=q\circ f$ for some $q\in\mathscr{F}$ and some smooth map $f\colon U_{\alpha} \to \plotdom{q}$ (\IZ, 1.68).
  
    If the family $\mathscr{F}$ satisfies $\bigcup_{q\in \mathscr{F}} q(\plotdom{q})=X$, then $\mathscr{F}$ is called a \textit{covering generating family}. 
    In this case, a parametrization $p\colon \plotdom{p}\to X$ is a plot of $\mathscr{D}$ if and only if it satisfies the following property:
    there exists an open covering $\{U_{\alpha}\}$ of $\plotdom{p}$ such that for each $\alpha$, ${p|}_{U_{\alpha}}=q\circ f$ for some $q\in\mathscr{F}$ and some smooth map $f\colon U_{\alpha} \to \plotdom{q}$.
\end{defi}

We now proceed to define a diffeology on the space with the family $\{d_r\}_{r>0}$. 

\begin{prop} \label{proposition definition diffeology on dr space}
    Let $(X, \{d_r\}_{r>0})$ satisfy the weaker $\{d_r\}$-axioms in \defref{definition weaker triangle inequality}.
    A parametrization $p\colon\plotdom{p}\to X$ is \textit{smooth at} $t_0\in \plotdom{p}$ if there exists $\delta>0$ satisfying the following conditions: 
    \begin{itemize}
        \item The map $\openball{\delta}{t_0}\to \mathbb{R}; t\mapsto d_r(p(t_0),p(t))$ is continuous for each $r>0$. 
        \item For any $t_1\in \openball{\delta}{t_0}\setminus p^{-1}(p(t_0))$, the map $\openball{\delta}{t_0}\to \mathbb{R}; t\mapsto d_r(p(t_1),p(t))$ is smooth at $t_0$ for each $r>0$. 
    \end{itemize}
    A parametrization $p\colon\plotdom{p}\to X$ is said to be a \textit{plot} if it is smooth at every point $t_0\in \plotdom{p}$. 
    Then, the set of all plots of $X$ forms a diffeology on $X$. 
\end{prop}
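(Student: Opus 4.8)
The plan is to check the three axioms of \defref{definition diffeology} — Covering, Locality, and Smooth compatibility — directly for the stated set of parametrizations. The whole verification is essentially a matter of keeping track of neighbourhoods, since the predicate ``smooth at $t_0$'' only constrains a parametrization on an arbitrarily small ball $\openball{\delta}{t_0}$; I do not expect the $\{d_r\}$-axioms themselves to be needed beyond the self-distance axiom, which guarantees $d_r(x,x)=0\in\mathbb{R}$.

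\textbf{Covering and Locality.} For Covering I would take the constant parametrization $p\colon\plotdom{p}\to X$ at $x_0$ and any $t_0$ with $\openball{\delta}{t_0}\subset\plotdom{p}$: the first condition holds because $t\mapsto d_r(p(t_0),p(t))=d_r(x_0,x_0)=0$ is a constant real-valued map, and the second condition is vacuous because $\openball{\delta}{t_0}\setminus p^{-1}(p(t_0))=\emptyset$. For Locality, given an open cover $\{U_\alpha\}$ of $\plotdom{p}$ with each $p|_{U_\alpha}$ a plot and a point $t_0\in U_\alpha$, I would take $\delta>0$ witnessing smoothness of $p|_{U_\alpha}$ at $t_0$ with $\openball{\delta}{t_0}\subset U_\alpha$; since $p$ and $p|_{U_\alpha}$ agree on $\openball{\delta}{t_0}$ and hence $\openball{\delta}{t_0}\cap p^{-1}(p(t_0))=\openball{\delta}{t_0}\cap (p|_{U_\alpha})^{-1}((p|_{U_\alpha})(t_0))$, the same $\delta$ witnesses smoothness of $p$ at $t_0$.

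\textbf{Smooth compatibility.} Let $p$ be a plot, $V\subset\mathbb{R}^m$ open, and $f\colon V\to\plotdom{p}$ smooth; fix $s_0\in V$ and set $t_0=f(s_0)$. I would first choose $\delta>0$ witnessing smoothness of $p$ at $t_0$, and then, by continuity of $f$, a $\delta'>0$ with $\openball{\delta'}{s_0}\subset V$ and $f(\openball{\delta'}{s_0})\subset\openball{\delta}{t_0}$. The first condition for $p\circ f$ at $s_0$ follows because $s\mapsto d_r((p\circ f)(s_0),(p\circ f)(s))$ is the composite of $f|_{\openball{\delta'}{s_0}}$ with the continuous map $t\mapsto d_r(p(t_0),p(t))$. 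For the second condition, take $s_1\in\openball{\delta'}{s_0}$ with $(p\circ f)(s_1)\neq(p\circ f)(s_0)$ and set $t_1=f(s_1)$; then $t_1\in\openball{\delta}{t_0}$ and $p(t_1)\neq p(t_0)$, so $t_1\in\openball{\delta}{t_0}\setminus p^{-1}(p(t_0))$, hence $t\mapsto d_r(p(t_1),p(t))$ is $C^\infty$ on some neighbourhood $W$ of $t_0$. Consequently $s\mapsto d_r(p(t_1),p(f(s)))$, being the composite of the smooth map $f$ with that $C^\infty$ function, is $C^\infty$ on $f^{-1}(W)\cap\openball{\delta'}{s_0}$, a neighbourhood of $s_0$. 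Thus $p\circ f$ is smooth at every point and is a plot.

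\textbf{Expected obstacle.} None of the steps is deep, but the point requiring care is the second condition under Smooth compatibility: one must observe that it suffices for $(p\circ f)(s_1)$ to differ from $(p\circ f)(s_0)$ — rather than $f(s_1)$ from $f(s_0)$ — in order to place $t_1=f(s_1)$ into the set $\openball{\delta}{t_0}\setminus p^{-1}(p(t_0))$ over which the smoothness hypothesis on $p$ is available, and one must use that ``smooth at a point'' is stable under precomposition with a globally smooth map precisely because it only asks for $C^\infty$-ness on \emph{some} neighbourhood of that point (which may depend on $r$ and on $t_1$), not on the whole ball $\openball{\delta}{t_0}$.
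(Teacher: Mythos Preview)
Your proposal is correct and follows essentially the same route as the paper: the paper likewise dismisses Covering and Locality as immediate and proves Smooth compatibility by choosing $\delta$ witnessing smoothness of $p$ at $f(s_0)$, pulling back via continuity of $f$ to a ball $\openball{\delta'}{s_0}$, and then using the identity $[s\mapsto d_r((p\circ f)(s_1),(p\circ f)(s))]=[t\mapsto d_r(p(f(s_1)),p(t))]\circ f$ together with the observation that $(p\circ f)(s_1)\neq(p\circ f)(s_0)$ forces $f(s_1)\in\openball{\delta}{f(s_0)}\setminus p^{-1}(p(f(s_0)))$. Your write-up is in fact slightly more explicit than the paper's on the Covering and Locality axioms and on the care point you flag in the final paragraph.
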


\begin{proof}
Since the covering and locality axioms follow directly from the definitions, we proceed to prove the smooth compatibility axiom. 
Let $p\colon\plotdom{p}\to X$ be a plot of $X$, and let $f\colon V\to \plotdom{p}$ be a smooth map from a Euclidean open set. 
Let $t_0\in V$ be arbitrary. 
Since $p$ is smooth at $f(t_0)$, there exists $\delta>0$ such that the following conditions hold: 
\begin{itemize}
    \item The map $\openball{\delta}{f(t_0)}\to \mathbb{R}; t\mapsto d_r(p\circ f(t_0),p(t))$ is continuous for each $r>0$. 
    \item For any $t_1\in \openball{\delta}{f(t_0)}\setminus p^{-1}(p\circ f(t_0))$, the map $\openball{\delta}{f(t_0)}\to \mathbb{R}; t\mapsto d_r(p\circ f(t_1),p(t))$ is smooth at $f(t_0)$ for each $r>0$. 
\end{itemize} 
By the continuity of $f$, there exists $\delta'>0$ such that $\openball{\delta'}{t_0}\subset f^{-1}(\openball{\delta}{f(t_0)})$. 
Using the identity 
\[
    [t\mapsto d_r(p\circ f(s),p\circ f(t))]=[t\mapsto d_r(p\circ f(s),p(t))]\circ f
\]
for any $s\in \openball{\delta'}{t_0}$, we obtain:
\begin{itemize}
    \item The map $\openball{\delta'}{t_0}\to \mathbb{R}; t\mapsto d_r(p\circ f(t_0),p\circ f(t))$ is continuous. 
    \item For any $t_1\in \openball{\delta'}{t_0}\setminus (p\circ f)^{-1}(p\circ f(t_0))$, the map $\openball{\delta'}{t_0}\to \mathbb{R}; t\mapsto d_r(p\circ f(t_1),p\circ f(t))$ is smooth at $t_0$ for each $r>0$. 
\end{itemize}
These conditions imply that $p\circ f$ is smooth at $t_0$, and therefore, $p\circ f$ is a plot of $X$. 
\end{proof}

In \tilingdflg, a diffeology is introduced on the tiling space. 
We aim to show that this diffeology coincides with the one defined using the family $\{d_r\}_{r>0}$ (\proref{proposition definition diffeology on dr space}). 
First, we review the diffeology on the tiling space defined in \tilingdflg. 

\begin{defi}[\tilingdflg, Definition 12] \label{definition tiling diffeology}
    Let $T$ be a tiling of $\mathbb{R}^d$. For each $T_1\in\tilingsp{T}$, we define the map 
    \[
        F_{T_1}\colon \mathbb{R}^d\to \tilingsp{T};x\mapsto T_1+x.
    \]
    The diffeology on $\tilingsp{T}$ defined in \tilingdflg~is the diffeology generated by the family $\{F_{T_1}\mid T_1\in \tilingsp{T}\}$. 
\end{defi}

We now prove that this diffeology coincides with the one defined using the family $\{d_r\}_{r>0}$ on $\tilingsp{T}$ (\proref{proposition definition diffeology on dr space}). 

\begin{thm} \label{theorem tiling diffeology is dr diffeology}
    Let $T$ be a tiling of $\mathbb{R}^d$, and let $\{d_r\}_{r>0}$ be the family introduced in \proref{proposition definition tiling dr}. 
    The diffeology on $\tilingsp{T}$, defined in \defref{definition tiling diffeology}, coincides with the one induced by the family $\{d_r\}_{r>0}$ (see \proref{proposition definition diffeology on dr space}).
\end{thm}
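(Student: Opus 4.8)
The plan is to prove the two inclusions of diffeologies separately. The common input is an elementary observation about the tilings in $\tilingsp{T}$: since $\prototile$ is finite and every prototile, being homeomorphic to a closed disk, contains a Euclidean open ball (by invariance of domain), there is a constant $\rho_0>0$, depending only on $\prototile$, such that no tiling in $\tilingsp{T}$ contains two distinct tiles differing by a translation of norm $<\rho_0$. Consequently, for every $T'\in\tilingsp{T}$ and $r>0$,
\[
    S_r(T')=\{v\in\mathbb{R}^d\mid (T'+v)\cap\closedball{r}=T'\cap\closedball{r}\}
\]
meets $\openball{\rho_0}{0}$ only in $0$. A direct unwinding of \proref{proposition definition tiling dr} gives $d_r(T'+a,T'+b)=\mathrm{dist}\bigl(a-b,\ S_r(T'+b)\bigr)$ for all $a,b\in\mathbb{R}^d$, so
\[
    d_r(T'+a,T'+b)=|a-b|\quad\text{whenever }|a-b|<\rho_0/2,
\]
uniformly in $r$. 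This identity drives both directions.

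To include the diffeology of \defref{definition tiling diffeology} into the one of \proref{proposition definition diffeology on dr space}, I would show that each generator $F_{T_1}$ is a plot of the latter. Fix $x_0$ and take $\delta<\rho_0/4$; by the identity above, $x\mapsto d_r(F_{T_1}(x_0),F_{T_1}(x))=|x_0-x|$ is continuous on $\openball{\delta}{x_0}$, and for each $x_1\in\openball{\delta}{x_0}$ the map $x\mapsto d_r(F_{T_1}(x_1),F_{T_1}(x))=|x_1-x|$ is smooth at $x_0$ provided $x_1\neq x_0$, hence in particular for $x_1\in\openball{\delta}{x_0}\setminus F_{T_1}^{-1}(F_{T_1}(x_0))$, a set containing $x_0$. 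So $F_{T_1}$ is smooth at every point. Since $\{F_{T_1}\mid T_1\in\tilingsp{T}\}$ is a covering generating family, \defref{definition generating family} (i.e.\ \IZ, 1.68) expresses every plot of the generated diffeology locally as $F_{T_1}\circ f$ with $f$ smooth; as the diffeology of \proref{proposition definition diffeology on dr space} is stable under smooth reparametrization and satisfies locality, it contains all such plots.

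For the reverse inclusion, let $p$ be a plot of the diffeology of \proref{proposition definition diffeology on dr space}. The first condition in that definition, together with \proref{proposition basis of topology}, shows that $p^{-1}(\nhddr{r}{\varepsilon}{x})$ is open for every basic open set, so $p$ is continuous for the $\{d_r\}$-topology, which by \proref{proposition tiling topology} is the metric topology of $\tilingsp{T}$. Using the standard local structure of tiling spaces---near each point $\tilingsp{T}$ is homeomorphic to the product of a totally disconnected transversal with a disk, the disk direction being the local translation orbit---the image of a small connected ball $\openball{\delta'}{t_0}$ lies in the single orbit $\orbit{p(t_0)}$, so $p|_{\openball{\delta'}{t_0}}=F_{p(t_0)}\circ u$ for a continuous $u\colon\openball{\delta'}{t_0}\to\mathbb{R}^d$ with $u(t_0)=0$. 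Shrinking $\delta'$ so that $|u|<\rho_0/4$ there, the identity from the first paragraph gives $d_r(p(t_1),p(t))=|u(t_1)-u(t)|$ for all $t_1,t\in\openball{\delta'}{t_0}$ and all $r$. Now the second condition in the plot definition, applied at $t_0$ to all base points $t_1$ with $p(t_1)\neq p(t_0)$, says that each $t\mapsto|u(t_1)-u(t)|$ is smooth at $t_0$; since differences of the squares $|u(t_1)-u(t)|^2$ are affine functions of $u(t)$, this makes $t\mapsto\langle u(t_1)-u(t_1'),u(t)\rangle$ smooth at $t_0$ for all such $t_1,t_1'$, which pins down every coordinate of $u$ (as $p$ is not locally constant near $t_0$, the vectors $u(t_1)-u(t_1')$ span the linear span of the image of $u$, and the orthogonal components of $u$ vanish). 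Running this at every point of $\openball{\delta'}{t_0}$ shows $u\in C^\infty$, so $p|_{\openball{\delta'}{t_0}}=F_{p(t_0)}\circ u$ is a plot of the generated diffeology; locality then finishes the inclusion. (Points where $p$ is locally constant are handled trivially, $p$ being there a constant parametrization.)

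The step I expect to be the main obstacle is the first one in the reverse inclusion: showing that a $\{d_r\}$-plot locally takes values in a single translation orbit. This rests on the local product (``matchbox manifold'') description of the tiling space, hence on finite local complexity of the tilings under consideration; without that structural input one would instead have to extract, for each fixed $t$ near $t_0$, a translation vector valid simultaneously for all radii $r$, which is delicate since the first plot condition only yields, for each $r$ separately, a neighborhood of $t_0$ on which $d_r(p(t_0),\cdot)$ is small, and these neighborhoods may shrink with $r$. A secondary point is that the second plot condition, used with a single base point $t_1$, controls only the norm $|u(t_1)-u(\cdot)|$ rather than $u$ itself, which is why the spanning family of base points and the polarization step above are needed.
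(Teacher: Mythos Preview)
Your first inclusion and the choice of the constant $\rho_0$ mirror the paper's argument (with its $\lambda_T$), and your smoothness step for $u$ via polarization is a legitimate variant of the paper's implicit function theorem argument; both exploit the family $\{\,|u(t_i)-u(\cdot)|^2\,\}_i$ with linearly independent $\overrightarrow{u(t_0)u(t_i)}$.

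The substantive divergence is exactly where you flagged it: to place $p$ locally inside a single translation orbit, you invoke the local product (``matchbox manifold'') structure of $\tilingsp{T}$, which needs finite local complexity. The paper's \defref{definition tiling} does not assume FLC (finitely many prototiles does not imply finitely many local patterns), so this import is an extra hypothesis the statement does not grant. The paper sidesteps it by a self-contained connectedness argument: having fixed one $r$ and found $\delta'$ with $d_r(p(t_0),p(t))<\varepsilon<\lambda_T/2$ on $\openball{\delta'}{t_0}$, it shows that for \emph{every} $r'>r$ the set
\[
A_{r'}=\{t\in\openball{\delta'}{t_0}\mid d_{r'}(p(t_0),p(t))<\varepsilon\}
\]
is clopen in $\openball{\delta'}{t_0}$. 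Openness is continuity of $d_{r'}(p(t_0),p(\cdot))$; for closedness, a boundary point $t\in\overline{A_{r'}}\setminus A_{r'}$ would still have a witness $v$ with $|v|<\varepsilon$ for radius $r$, and if $d_{r'}(p(t_0),p(t))<\lambda_T$ the corresponding witness $u$ would have to equal $v$ by the uniqueness encoded in $\lambda_T$, contradicting $t\notin A_{r'}$; hence $d_{r'}(p(t_0),p(t))\ge\lambda_T>\varepsilon$, which contradicts continuity along a sequence in $A_{r'}$ approaching $t$. Connectedness then gives $A_{r'}=\openball{\delta'}{t_0}$ for all $r'$, and the same uniqueness produces a single $u_t$ independent of $r'$ with $p(t_0)+u_t=p(t)$. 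This is precisely the ``delicate'' extraction you set aside; it is how the paper avoids any structural input about $\tilingsp{T}$.
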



\begin{proof}
Let $\prototile$ be the set of prototiles of $T$. For each $\tau\in \prototile$, choose $\lambda_\tau>0$ such that
\[
    \Int(\tau)\cap \Int(\tau+v)\neq\emptyset
\]
for every $v\in\mathbb{R}^d$ with $0<|v|<2\lambda_\tau$, and set
\[
    \lambda_T:=\min_{\tau\in \prototile}\lambda_\tau>0.
\]
Then, for every $S\in\Omega_T$, every nonempty subset $K\subset \mathbb{R}^d$, and any $u,v\in\mathbb{R}^d$ with $|u|,|v|<\lambda_T$, if
$(S+u)\cap K=(S+v)\cap K$, then $u=v$.

First, we prove that $F_{T_1}$ is a plot of the diffeology induced by $\{d_r\}_{r>0}$ for each $T_1\in\tilingsp{T}$.
Let $T_1\in \tilingsp{T}$ be a tiling, and let $t_0\in \mathbb{R}^d$ be arbitrary. Set $\delta=\lambda_T/2$. Then, for each $t\in \openball{\delta}{t_0}$, we have
\[
    d_r(F_{T_1}(t_0),F_{T_1}(t))=d_r(T_1+t_0,T_1+t)=|t_0-t|.
\]
Indeed, since $(T_1+t_0)+(t-t_0)=T_1+t$, we have $d_r(T_1+t_0,T_1+t)\le |t_0-t|$. 
If strict inequality held, then there would exist $v\in\mathbb{R}^d$ with $|v|<|t_0-t|<\lambda_T$ such that $((T_1+t_0)+v)\cap rD=(T_1+t)\cap rD$. 
Since also $(T_1+t_0)+(t-t_0)=T_1+t$ and $|t-t_0|<\lambda_T$, the defining 
property of $\lambda_T$ implies $v=t-t_0$, a contradiction.
Moreover, for any $t_1\in\openball{\delta}{t_0}$, the same argument shows that
$d_r(F_{T_1}(t_1),F_{T_1}(t))=|t_1-t|$ for each $t\in\openball{\delta}{t_0}$,
since $t,t_1\in\openball{\delta}{t_0}$ implies $|t_1-t|<2\delta=\lambda_T$.
In particular, the map $t\mapsto d_r(F_{T_1}(t_1),F_{T_1}(t))=|t_1-t|$
is smooth at $t_0$ for each $t_1\neq t_0$.
Thus, $F_{T_1}$ satisfies the conditions specified in 
\proref{proposition definition diffeology on dr space} for each $T_1\in\tilingsp{T}$.


Next, we prove the other inclusion. 
Let $p\colon\plotdom{p}\to \tilingsp{T}$ be a plot in the sense of \proref{proposition definition diffeology on dr space}, and let $t_0\in \plotdom{p}$. 
Then, there exists $\delta>0$ such that the map 
\[
    \openball{\delta}{t_0}\to \mathbb{R};\ t\mapsto d_r(p(t_0),p(t))
\]
is continuous for each $r>0$. 
Choose $\varepsilon>0$ such that $0<\varepsilon<\lambda_T/2$.
Let $r>0$ be arbitrary. By the continuity of the map $t\mapsto d_r(p(t_0),p(t))$, we can choose $0<\delta'\leq \delta$ such that $d_r(p(t_0),p(t))<\varepsilon$ for all $t\in \openball{\delta'}{t_0}$. 
We claim that this $\delta'$ satisfies $d_{r'}(p(t_0),p(t))<\varepsilon$ for all $r'>0$ and all $t\in \openball{\delta'}{t_0}$. 
To prove this, define
\[
    A_{r'}=\{t\in\openball{\delta'}{t_0}\mid d_{r'}(p(t_0), p(t))<\varepsilon\}.
\]
We only need to consider the case where $r'>r$, as $r'\leq r$ implies $A_{r'}\supset A_r=\openball{\delta'}{t_0}$. 
Since the map $t\mapsto d_{r'}(p(t_0),p(t))$ is continuous on $\openball{\delta}{t_0}$, $A_{r'}\subset \openball{\delta'}{t_0}$ is open. 
Since $\openball{\delta'}{t_0}$ is connected and $t_0\in A_{r'}$, it suffices to show that $A_{r'}$ is closed in $\openball{\delta'}{t_0}$. 
To prove this, suppose there exists $t\in \overline{A_{r'}}\setminus A_{r'}$. 
Since $t\in \openball{\delta'}{t_0}$, there exists $v\in \mathbb{R}^d$ such that $|v|<\varepsilon$ and 
\[
    (p(t_0)+v)\cap \closedball{r}=p(t)\cap \closedball{r}. 
\]
Since $t\notin A_{r'}$, the same $v$ does not satisfy
\[
    (p(t_0)+v)\cap \closedball{r'}=p(t)\cap \closedball{r'}.
\]
If $d_{r'}(p(t_0),p(t))<\lambda_T$, then there exists $u\in\mathbb{R}^d$ with $|u|<\lambda_T$ such that $(p(t_0)+u)\cap \closedball{r'}=p(t)\cap \closedball{r'}$. Restricting to $\closedball{r}$, we also have $(p(t_0)+u)\cap \closedball{r}=p(t)\cap \closedball{r}$, and since $|v|<\varepsilon<\lambda_T$, the defining property of $\lambda_T$ implies $u=v$, a contradiction. Hence,
\[
    d_{r'}(p(t_0),p(t))\geq \lambda_T>\varepsilon.
\]
On the other hand, since $t\in \overline{A}_{r'}$, there exists a sequence $\{s_n\}_{n\in \mathbb{Z}_{\geq 0}}$ converging to $t$ such that $d_{r'}(p(t_0),p(s_n))<\varepsilon$ for all $n$. 
This contradicts the continuity of the map $t\mapsto d_{r'}(p(t_0), p(t))$ at the point $t$. 
Thus, $A_{r'}$ is closed in $\openball{\delta'}{t_0}$. Therefore, we obtain $A_{r'}=\openball{\delta'}{t_0}$.

By the above argument, for each $t\in \openball{\delta'}{t_0}$ and each $r'>0$, there exists $u\in\mathbb{R}^d$ such that $|u|<\varepsilon$ and
\[
    (p(t_0)+u)\cap \closedball{r'}=p(t)\cap \closedball{r'}.
\]
By the defining property of $\lambda_T$, such a vector $u$ is unique, and hence independent of $r'$. Therefore, for each $t\in\openball{\delta'}{t_0}$, there exists a unique $u_t\in\mathbb{R}^d$ such that $|u_t|<\varepsilon$ and
\[
    (p(t_0)+u_t)\cap \closedball{r'}=p(t)\cap \closedball{r'}
\]
for every $r'>0$. In particular, $p(t_0)+u_t=p(t)$ for each $t\in\openball{\delta'}{t_0}$. Thus, we obtain a unique map $u\colon \openball{\delta'}{t_0}\to \mathbb{R}^d;\ t\mapsto u_t$ such that $p(t_0)+u(t)=p(t)$ for each $t\in\openball{\delta'}{t_0}$.
We now show that this map $u$ is smooth. 

First, we prove the continuity of $u$ at the point $t_0\in\openball{\delta'}{t_0}$. 
It has already been proved that the map $d_r(p(t_0), p(\cdot))\colon\openball{\delta'}{t_0}\to \mathbb{R}$ is continuous. 
Since $d_r(p(t_0),p(t))=|u(t_0)-u(t)|$ for each $t\in\openball{\delta'}{t_0}$, for any $\varepsilon'>0$, there exists $\eta>0$ such that for each $t\in\openball{\eta}{t_0}$,
\[
    |u(t)-u(t_0)|=\bigl|d_r(p(t_0),p(t))-d_r(p(t_0),p(t_0))\bigr|<\varepsilon'.
\]
Hence, $u$ is continuous at $t_0$.



Next, we prove the smoothness of $u$ at the point $t_0\in\openball{\delta'}{t_0}$.
If $p(t)=p(t_0)$ holds for all $t\in\openball{\delta'}{t_0}$, then $u$ is constant on $\openball{\delta'}{t_0}$, and in particular, smooth on $\openball{\delta'}{t_0}$.
Otherwise, there exists $t_1\in\openball{\delta'}{t_0}\setminus p^{-1}(p(t_0))$.
By repeating this process, we can find a finite sequence $t_1, \dots, t_n\in \openball{\delta'}{t_0}$ such that
\[
    \{u(t)\mid t\in\openball{\delta'}{t_0}\}\subset \Span\{\overrightarrow{u(t_0)u(t_i)}\mid i=1,\dots,n\}
\]
and the set of vectors $\left\{\overrightarrow{u(t_0)u(t_i)}\right\}_{i=1}^n$ is linearly independent.
Thus, we can restrict the codomain of $u$ to $\Span\left\{\overrightarrow{u(t_0)u(t_i)}\mid i=1,\dots,n\right\}\cong \mathbb{R}^n$.
Note that $|u_t| < \varepsilon < \lambda_T/2$ for each $t \in \openball{\delta'}{t_0}$,
so for any $t, t_i \in \openball{\delta'}{t_0}$, we have
$|u(t)-u(t_i)| \leq |u(t)|+|u(t_i)| < 2\varepsilon < \lambda_T$.
Therefore, the same uniqueness argument gives
$d_r(p(t_i),p(t))=|u(t_i)-u(t)|$
for all $t,t_i\in\openball{\delta'}{t_0}$.
We define the map
\[
    f\colon \openball{\delta'}{t_0}\times \mathbb{R}^n\to \mathbb{R}^n;(t,x)\mapsto (|x-u(t_i)|^2-d_r(p(t_i), p(t))^2)_{i=1}^n.
\]
In particular, $f(t_0, u(t_0)) = 0$ and $f(t, u(t)) = 0$ for all $t\in\openball{\delta'}{t_0}$.
Since the map $d_r(p(t_i), p(\cdot))$ is smooth at $t_0$ for each $t_i$, the map $f$ is smooth around $(t_0, u(t_0))$.
By the linear independence of $\left\{\overrightarrow{u(t_0)u(t_i)}\right\}_{i=1}^n$, we have
\[
    \left.\det\left({\frac{\partial f}{\partial x}}(t_0, u(t_0))\right)=\det{\begin{pmatrix}
        {}^t (2x-2u(t_1)) \\ \vdots \\ {}^t(2x-2u(t_n))
       \end{pmatrix}}\right|_{t=t_0, x=u(t_0)}\neq 0.
\]
Thus, by the implicit function theorem, there exist open sets $U\subset \openball{\delta'}{t_0}$ and $V\subset \mathbb{R}^n$ with $t_0\in U$ and $u(t_0)\in V$, and a smooth map $g\colon U\to V$ such that
for all $(t,x)\in U\times V$, 
\[
    f(t,x)=0\iff x=g(t). 
\]
This map $g$ is precisely the restriction ${u|}_{U}$, which implies that $u$ is smooth at $t_0$. 
\end{proof}

\appendix

\section{Relating topologies and diffeologies}

We have seen that the topology and diffeology of the tiling space can be connected through the structure of the family $\{d_r\}_{r>0}$.


It is natural to ask whether a similar relationship holds for the space of submanifolds $\embmfd{d}{N}$, equipped with the family introduced in \exref{example submanifold}.

In \mfdembdflg, a diffeology on $\embmfd{d}{N}$ is defined as follows:  
a parametrization $p\colon\plotdom{p}\to \embmfd{d}{N}$ is a plot if  
\[
\Gamma_p = \{(t,x)\in \plotdom{p}\times \mathbb{R}^N \mid x \in p(t)\}
\]
is a closed submanifold of $\plotdom{p}\times\mathbb{R}^N$, and the projection  
\[
\hat{p} \colon \Gamma_p \to \plotdom{p}, \quad (t,x) \mapsto t
\]
is a submersion.

It is natural to ask how this diffeology relates to the one induced by $\{d_r\}_{r>0}$ (see \proref{proposition definition diffeology on dr space}). 
At least, the following fact holds in general. 

\begin{lemma} \label{lem smooth plot is continuous}
    Let $(X, \{d_r\}_{r>0})$ satisfy the weaker $\{d_r\}$-axioms in \defref{definition weaker triangle inequality}. 
    If a map $p\colon\plotdom{p}\to X$ is a plot of $X$ as defined in \proref{proposition definition diffeology on dr space}, then $p$ is also continuous. 
\end{lemma}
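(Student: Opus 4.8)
The plan is to prove that $p$ is continuous at an arbitrary point $t_0\in\plotdom{p}$, working with the basis $\{\nhddr{r}{\varepsilon}{x}\}$ for the topology on $X$ provided by \proref{proposition basis of topology}. Fix $t_0$ and an open neighborhood $W$ of $p(t_0)$ in $X$. Since these sets form a basis, there are $x\in X$, $r>0$, and $\varepsilon>0$ with $p(t_0)\in\nhddr{r}{\varepsilon}{x}\subset W$.

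Because $d_r$ is not assumed symmetric, the center $x$ of this basic set is in general not $p(t_0)$, so I cannot directly use the clauses in \proref{proposition definition diffeology on dr space}, which only control $d_r(p(t_0),p(t))$. The fix is to recenter: applying \proref{proposition basis of topology} to the point $p(t_0)\in\nhddr{r}{\varepsilon}{x}$ yields $r'>0$ and $\varepsilon'>0$ with
\[
    \nhddr{r'}{\varepsilon'}{p(t_0)}\subset\nhddr{r}{\varepsilon}{x}\subset W.
\]
It therefore suffices to find $\delta'>0$ with $p(\openball{\delta'}{t_0})\subset\nhddr{r'}{\varepsilon'}{p(t_0)}$, that is, $d_{r'}(p(t_0),p(t))<\varepsilon'$ for all $t$ in a neighborhood of $t_0$.

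This last step is immediate from the definition of a plot. Since $p$ is smooth at $t_0$, there is $\delta>0$ such that the map $\openball{\delta}{t_0}\to\mathbb{R};\ t\mapsto d_{r'}(p(t_0),p(t))$ is continuous. By the Self-distance Axiom of \defref{definition axioms of dr}, this map takes the value $d_{r'}(p(t_0),p(t_0))=0<\varepsilon'$ at $t_0$, so by continuity there is $\delta'$ with $0<\delta'\le\delta$ and $d_{r'}(p(t_0),p(t))<\varepsilon'$ for all $t\in\openball{\delta'}{t_0}$. Hence $p(\openball{\delta'}{t_0})\subset\nhddr{r'}{\varepsilon'}{p(t_0)}\subset W$, so $p$ is continuous at $t_0$; as $t_0$ was arbitrary, $p$ is continuous.

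There is essentially no serious obstacle here: the only point needing care is the asymmetry of $d_r$, which is precisely what forces the intermediate appeal to \proref{proposition basis of topology} to recenter the basic neighborhood at $p(t_0)$ before using the first clause in the definition of a plot. The second (smoothness) clause of \proref{proposition definition diffeology on dr space} plays no role in this statement.
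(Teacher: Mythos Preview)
Your argument is essentially the paper's: both reduce to showing that for every $r,\varepsilon>0$ the preimage $p^{-1}(\nhddr{r}{\varepsilon}{p(t_0)})$ is a neighborhood of $t_0$, which follows from the continuity of $t\mapsto d_r(p(t_0),p(t))$ at $t_0$ together with the self-distance axiom. The paper simply records this in one line, leaving implicit that $\{\nhddr{r}{\varepsilon}{p(t_0)}\}_{r,\varepsilon}$ is a neighborhood base at $p(t_0)$, whereas you make the ``recentering'' step explicit.

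One small point worth tightening: you invoke \proref{proposition basis of topology} twice, but that proposition is stated and proved under the axioms of \defref{definition axioms of dr} (the weak triangle inequality), while \lemref{lem smooth plot is continuous} only assumes the weaker axioms of \defref{definition weaker triangle inequality}. Under the weaker axioms the appropriate reference is the quasi-uniformity of \proref{proposition definition uniformity of dr space} together with \proref{proposition nhd system uniform space}, which already gives that $\{\nhddr{r}{\varepsilon}{p(t_0)}\}_{r,\varepsilon}$ is a neighborhood base at $p(t_0)$; you can then pass directly from the open set $W$ to some $\nhddr{r'}{\varepsilon'}{p(t_0)}\subset W$ without the intermediate basic set centered at another point $x$. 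This is a citation issue rather than a gap in the logic.
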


\begin{proof}
Let $t_0\in \plotdom{p}$, $r>0$, and $\varepsilon>0$ be arbitrary. 
Since $p$ is a plot of $X$, there exists $\delta>0$ such that $d_r(p(t_0), p(t))<\varepsilon$ for all $t\in\openball{\delta}{t_0}$. 
This implies that $\openball{\delta}{t_0}\subset p^{-1}(\entourage{r}{\varepsilon}[p(t_0)])$. 
\end{proof}

\begin{prop} \label{proposition D-topology is finer than topology}
    Let $(X, \{d_r\}_{r>0})$ satisfy the weaker $\{d_r\}$-axioms in \defref{definition weaker triangle inequality}. 
    The $D$-topology of the diffeology on $X$ induced by the family $\{d_r\}_{r>0}$ (see \proref{proposition definition diffeology on dr space}) is finer than 
    the topology on $X$ induced by the same family (see \defref{definition neighborhoods of topology}).
\end{prop}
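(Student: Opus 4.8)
The plan is to show that every subset of $X$ which is open in the topology of \defref{definition neighborhoods of topology} is open in the $D$-topology; this is exactly the assertion that the $D$-topology is finer. I would work directly from the defining universal property of the $D$-topology (\defref{definition D-topology}): it is the largest (finest) topology on $X$ for which every plot $p\colon \plotdom{p}\to X$ is continuous. Equivalently, a subset $U\subset X$ is $D$-open if and only if $p^{-1}(U)$ is open in $\plotdom{p}$ for every plot $p$ of the diffeology of \proref{proposition definition diffeology on dr space}.

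First I would fix an arbitrary set $U\subset X$ that is open in the topology induced by $\{d_r\}_{r>0}$. Then I would invoke \lemref{lem smooth plot is continuous}: every plot $p\colon \plotdom{p}\to X$ of the diffeology induced by $\{d_r\}_{r>0}$ is continuous as a map into $X$ equipped with the topology of \defref{definition neighborhoods of topology}. Consequently $p^{-1}(U)$ is open in $\plotdom{p}$ for every plot $p$, which by the characterization recalled above means $U$ is $D$-open.

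Since $U$ was an arbitrary open set of the $\{d_r\}$-topology, this shows that every such open set is $D$-open, i.e.\ the $D$-topology contains the topology of \defref{definition neighborhoods of topology}, hence is finer. Phrased via maps: the identity map from $X$ with the $D$-topology to $X$ with the $\{d_r\}$-topology is continuous, because its composite with every plot (which, by smooth compatibility of the $D$-topology, suffices to test continuity out of a diffeological space) is continuous by \lemref{lem smooth plot is continuous}.

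There is essentially no genuine obstacle here: the statement is a formal consequence of \lemref{lem smooth plot is continuous} together with the universal property of the $D$-topology. The only point requiring mild care is to use the correct description of $D$-open sets — that $U$ is $D$-open precisely when $p^{-1}(U)$ is open for \emph{all} plots $p$ — and to note that, since the $\{d_r\}$-topology already makes all plots continuous, it must be coarser than the finest such topology, namely the $D$-topology.
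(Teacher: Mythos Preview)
Your proposal is correct and follows essentially the same approach as the paper: both arguments reduce the claim to \lemref{lem smooth plot is continuous} and the characterization of $D$-open sets as those whose preimage under every plot is open. The only cosmetic difference is that the paper checks this on the basic neighborhoods $\nhddr{r}{\varepsilon}{x}$ rather than on an arbitrary open set, which amounts to the same thing.
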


\begin{proof}
Let $x\in X$, $r>0$, and $\varepsilon>0$ be arbitrary. For each plot $p\colon \plotdom{p}\to X$ of $X$, 
the preimage $p^{-1}(\nhddr{r}{\varepsilon}{x})$ is open in $\plotdom{p}$ because $p$ is continuous (\lemref{lem smooth plot is continuous}). 
Therefore, $\nhddr{r}{\varepsilon}{x}$ is $D$-open. 
\end{proof}

\section*{Acknowledgement}




I would like to express my sincere gratitude to my supervisor, Takuya Sakasai for his valuable guidance and continuous support throughout this research. 
I am also grateful to Katsuhiko Kuribayashi for many insightful discussions and helpful advice. 
I thank Toshiyuki Kobayashi for his generous support and encouragement as my supporting supervisor in the WINGS-FMSP program.  
I am grateful to Ryotaro Kosuge for helpful conversations. 
This work was supported by JSPS Research Fellowships for Young Scientists and KAKENHI Grant Number JP24KJ0881. 
Lastly, I would like to acknowledge the WINGS-FMSP program for its financial support. 

\bibliographystyle{plain}
\bibliography{ParametrizedMetric}

@misc{Cantero,
  author        = {Federico Cantero Mor{\'a}n},
  title         = {A metric for the space of submanifolds of {G}alatius and {R}andal-{W}illiams},
  year          = {2015},
  eprint        = {1512.06849},
  archivePrefix = {arXiv},
  primaryClass  = {math.GT},
  note          = {arXiv:1512.06849},
}

@article{MR2653727,
	author = {Galatius, S{\o}ren and Randal-Williams, Oscar},
	doi = {10.2140/gt.2010.14.1243},
	fjournal = {Geometry \& Topology},
	issn = {1465-3060,1364-0380},
	journal = {Geom. Topol.},
	mrclass = {57R90 (55P47 57R56)},
	mrnumber = {2653727},
	mrreviewer = {R.\ M.\ Vogt},
	number = {3},
	pages = {1243--1302},
	title = {Monoids of moduli spaces of manifolds},
	url = {https://doi.org/10.2140/gt.2010.14.1243},
	volume = {14},
	year = {2010},
	bdsk-url-1 = {https://doi.org/10.2140/gt.2010.14.1243}}

@incollection{MR3243393,
	author = {B\"okstedt, Marcel and Madsen, Ib},
	booktitle = {An alpine expedition through algebraic topology},
	doi = {10.1090/conm/617/12282},
	isbn = {978-0-8218-9145-2},
	mrclass = {19D10 (55U10 57R50)},
	mrnumber = {3243393},
	mrreviewer = {Thomas\ H\"uttemann},
	pages = {39--80},
	publisher = {Amer. Math. Soc., Providence, RI},
	series = {Contemp. Math.},
	title = {The cobordism category and {W}aldhausen's {$K$}-theory},
	url = {https://doi.org/10.1090/conm/617/12282},
	volume = {617},
	year = {2014},
	bdsk-url-1 = {https://doi.org/10.1090/conm/617/12282}}

@article{MR2784914,
	author = {Galatius, S{\o}ren},
	doi = {10.4007/annals.2011.173.2.3},
	fjournal = {Annals of Mathematics. Second Series},
	issn = {0003-486X,1939-8980},
	journal = {Ann. of Math. (2)},
	mrclass = {20J06 (20E05 20F28 55N25 55P47 55R40 57R20)},
	mrnumber = {2784914},
	mrreviewer = {Behrooz\ Mashayekhy},
	number = {2},
	pages = {705--768},
	title = {Stable homology of automorphism groups of free groups},
	url = {https://doi.org/10.4007/annals.2011.173.2.3},
	volume = {173},
	year = {2011},
	bdsk-url-1 = {https://doi.org/10.4007/annals.2011.173.2.3}}

@article{MR4821355,
	author = {Schommer-Pries, Christopher},
	doi = {10.1112/topo.12335},
	fjournal = {Journal of Topology},
	issn = {1753-8416,1753-8424},
	journal = {J. Topol.},
	mrclass = {57R56 (55N22 81T45)},
	mrnumber = {4821355},
	number = {2},
	pages = {Paper No. e12335, 64},
	title = {Invertible topological field theories},
	url = {https://doi.org/10.1112/topo.12335},
	volume = {17},
	year = {2024},
	bdsk-url-1 = {https://doi.org/10.1112/topo.12335}}

@article{MR1631708,
	author = {Anderson, Jared E. and Putnam, Ian F.},
	doi = {10.1017/S0143385798100457},
	fjournal = {Ergodic Theory and Dynamical Systems},
	issn = {0143-3857,1469-4417},
	journal = {Ergodic Theory Dynam. Systems},
	mrclass = {46L55 (19K99 37B50 54H20)},
	mrnumber = {1631708},
	mrreviewer = {G.\ A.\ Noskov},
	number = {3},
	pages = {509--537},
	title = {Topological invariants for substitution tilings and their associated {$C^*$}-algebras},
	url = {https://doi.org/10.1017/S0143385798100457},
	volume = {18},
	year = {1998},
	bdsk-url-1 = {https://doi.org/10.1017/S0143385798100457}}

@book{MR2446623,
	author = {Sadun, Lorenzo},
	doi = {10.1090/ulect/046},
	isbn = {978-0-8218-4727-5},
	mrclass = {52C22 (37B50 52C23 55N05)},
	mrnumber = {2446623},
	mrreviewer = {Bernd\ Sing},
	pages = {x+118},
	publisher = {American Mathematical Society, Providence, RI},
	series = {University Lecture Series},
	title = {Topology of tiling spaces},
	url = {https://doi.org/10.1090/ulect/046},
	volume = {46},
	year = {2008},
	bdsk-url-1 = {https://doi.org/10.1090/ulect/046}}

@book{MR3025051,
	author = {Iglesias-Zemmour, Patrick},
	doi = {10.1090/surv/185},
	isbn = {978-0-8218-9131-5},
	mrclass = {58-02 (53Dxx 55Pxx 58A10 58A40)},
	mrnumber = {3025051},
	mrreviewer = {Daniel\ Belti\c{t}\u{a}},
	pages = {xxiv+439},
	publisher = {American Mathematical Society, Providence, RI},
	series = {Mathematical Surveys and Monographs},
	title = {Diffeology},
	url = {https://doi.org/10.1090/surv/185},
	volume = {185},
	year = {2013},
	bdsk-url-1 = {https://doi.org/10.1090/surv/185}}

@article{MR3884529,
	author = {Shimakawa, Kazuhisa and Yoshida, Kohei and Haraguchi, Tadayuki},
	doi = {10.2206/kyushujm.72.239},
	fjournal = {Kyushu Journal of Mathematics},
	issn = {1340-6116,1883-2032},
	journal = {Kyushu J. Math.},
	mrclass = {55U40 (18B30 18G55)},
	mrnumber = {3884529},
	mrreviewer = {Niles\ Johnson},
	number = {2},
	pages = {239--252},
	title = {Homology and cohomology via enriched bifunctors},
	url = {https://doi.org/10.2206/kyushujm.72.239},
	volume = {72},
	year = {2018},
	bdsk-url-1 = {https://doi.org/10.2206/kyushujm.72.239}}

@incollection{MR0607688,
	author = {Souriau, J.-M.},
	booktitle = {Differential geometrical methods in mathematical physics ({P}roc. {C}onf., {A}ix-en-{P}rovence/{S}alamanca, 1979)},
	isbn = {3-540-10275-2},
	mrclass = {22E99 (58F06 81B05)},
	mrnumber = {607688},
	pages = {91--128},
	publisher = {Springer, Berlin},
	series = {Lecture Notes in Math.},
	title = {Groupes diff\'{e}rentiels},
	volume = {836},
	year = {1980}}

@book{MR1726779,
	author = {Bourbaki, Nicolas},
	isbn = {3-540-64241-2},
	mrclass = {54-02 (00A05 54-01)},
	mrnumber = {1726779},
	note = {Translated from the French, Reprint of the 1989 English translation},
	pages = {vii+437},
	publisher = {Springer-Verlag, Berlin},
	series = {Elements of Mathematics (Berlin)},
	title = {General topology. {C}hapters 1--4},
	year = {1998}}

@book{MR1726872,
	author = {Bourbaki, Nicolas},
	isbn = {3-540-64563-2},
	mrclass = {54-02 (00A05 54-01)},
	mrnumber = {1726872},
	note = {Translated from the French, Reprint of the 1989 English translation},
	pages = {iv+363},
	publisher = {Springer-Verlag, Berlin},
	series = {Elements of Mathematics (Berlin)},
	title = {General topology. {C}hapters 5--10},
	year = {1998}}

@book{MR660063,
	author = {Fletcher, Peter and Lindgren, William F.},
	isbn = {0-8247-1839-9},
	mrclass = {54E15 (54-02)},
	mrnumber = {660063},
	mrreviewer = {T.\ L.\ Hicks},
	pages = {viii+216},
	publisher = {Marcel Dekker, Inc., New York},
	series = {Lecture Notes in Pure and Applied Mathematics},
	title = {Quasi-uniform spaces},
	volume = {77},
	year = {1982}}

@article{MR4882913,
	author = {Alatorre, Dar\'io and Rodr\'iguez-Guzm\'an, Diego},
	doi = {10.1007/s40590-025-00737-5},
	fjournal = {Bolet\'in de la Sociedad Matem\'atica Mexicana. Third Series},
	issn = {1405-213X,2296-4495},
	journal = {Bol. Soc. Mat. Mex. (3)},
	mrclass = {52C23 (37B52 58A40)},
	mrnumber = {4882913},
	number = {2},
	pages = {Paper No. 55},
	title = {Tiling spaces are covering spaces over irrational tori},
	url = {https://doi.org/10.1007/s40590-025-00737-5},
	volume = {31},
	year = {2025},
	bdsk-url-1 = {https://doi.org/10.1007/s40590-025-00737-5}}

@article{MR2506750,
	author = {Galatius, S{\o}ren and Tillmann, Ulrike and Madsen, Ib and Weiss, Michael},
	doi = {10.1007/s11511-009-0036-9},
	fjournal = {Acta Mathematica},
	issn = {0001-5962,1871-2509},
	journal = {Acta Math.},
	mrclass = {55P47 (55P15 55P42 57R75)},
	mrnumber = {2506750},
	mrreviewer = {Richard\ John\ Steiner},
	number = {2},
	pages = {195--239},
	title = {The homotopy type of the cobordism category},
	url = {https://doi.org/10.1007/s11511-009-0036-9},
	volume = {202},
	year = {2009},
	bdsk-url-1 = {https://doi.org/10.1007/s11511-009-0036-9}}


\end{document}